\newcommand{\sA}{\ensuremath{\mathscr{A}}}
\newcommand{\sB}{\ensuremath{\mathscr{B}}}
\newcommand{\sC}{\ensuremath{\mathscr{C}}}
\newcommand{\sD}{\ensuremath{\mathscr{D}}}
\newcommand{\sI}{\ensuremath{\mathscr{I}}}
\newcommand{\sK}{\ensuremath{\mathscr{K}}}
\newcommand{\sO}{\ensuremath{\mathscr{O}}}
\newcommand{\sU}{\ensuremath{\mathscr{U}}}
\newcommand{\calA}{\ensuremath{\mathcal{A}}}
\newcommand{\calH}{\ensuremath{\mathcal{H}}}
\newcommand{\calS}{\ensuremath{\mathcal{S}}}
\newcommand{\bbN}{\ensuremath{\mathbb{N}}}
\newcommand{\Atil}{\ensuremath{\widetilde{A}}}
\newcommand{\fhat}{\ensuremath{\widehat{f}}}
\newcommand{\uG}{\underline{\Gamma}}
\newcommand{\bfL}{\ensuremath{\mathbf{L}}}
\newcommand{\bfR}{\ensuremath{\mathbf{R}}}
\newcommand{\bfM}{\ensuremath{\mathbf{M}}}
\newcommand{\HoSh}{\operatorname{HoSh}}
\newcommand{\ten}{\ensuremath{\otimes}}
\newcommand{\Id}{\ensuremath{\operatorname{Id}}}
\newcommand{\Ho}{\ensuremath{\operatorname{Ho}}}
\newcommand{\Map}{\ensuremath{\operatorname{Map}}}
\newcommand{\op}{\ensuremath{^{\mathit{op}}}}
\newcommand{\adj}{\dashv}
\newcommand{\iso}{\cong}
\newcommand{\eqv}{\simeq}
\newcommand{\too}[1][]{\ensuremath{\overset{#1}{\longrightarrow}}}
\newcommand{\toot}{\ensuremath{\rightleftarrows}}
\newcommand{\into}{\ensuremath{\hookrightarrow}}
\newcommand{\we}{\ensuremath{\overset{\sim}{\longrightarrow}}}
\newcommand{\maps}{\colon}
\newcommand{\spam}{\,:\!}
\newcommand{\Set}{\ensuremath{\mathit{Set}}}
\theoremstyle{plain}
\newtheorem{thm}{Theorem}[section]
\newtheorem{cor}{Corollary}
\let\c@cor\c@thm\makeatother
  \numberwithin{cor}{section}
\newtheorem{prp}{Proposition}
\let\c@prp\c@thm\makeatother
  \numberwithin{prp}{section}
\newtheorem{lemma}{Lemma}
\let\c@lemma\c@thm\makeatother
  \numberwithin{lemma}{section}
\newtheorem{assume}{Assumption}
\let\c@assume\c@thm\makeatother
  \numberwithin{assume}{section}
\theoremstyle{definition}
\newtheorem{defn}{Definition}
\let\c@defn\c@thm\makeatother
  \numberwithin{defn}{section}
\theoremstyle{remark}
\newtheorem{rmk}{Remark}
\let\c@rmk\c@thm\makeatother
  \numberwithin{rmk}{section}
\newtheorem{ceg}{Counterexample}
\let\c@ceg\c@thm\makeatother
  \numberwithin{ceg}{section}
\renewcommand{\theenumi}{(\roman{enumi})}
\let\c@equation\c@thm
\numberwithin{equation}{section}
\numberwithin{thm}{section}
  \title{Parametrized spaces model locally constant homotopy sheaves}
  \author{Michael~A.\ Shulman}
  \address{Department of Mathematics, University of Chicago\\
    5734 S. University Ave, Chicago IL 60615}
\begin{document}
\maketitle

\begin{abstract}
  We prove that the homotopy theory of parametrized spaces embeds
  fully and faithfully in the homotopy theory of simplicial
  presheaves, and that its essential image consists of the locally
  homotopically constant objects.  This gives a homotopy-theoretic
  version of the classical identification of covering spaces with
  locally constant sheaves.  We also prove a new version of the
  classical result that spaces parametrized over $X$ are equivalent to
  spaces with an action of $\Omega X$.  This gives a
  homotopy-theoretic version of the correspondence between covering
  spaces and $\pi_1$-sets.  We then use these two equivalences to
  study base change functors for parametrized spaces.
\end{abstract}

\tableofcontents

\section{Introduction}
\label{sec:introduction}

Recently there has been growing interest in doing homotopy theory
`relative' to a base topological space.  One motivation for this is to
find a framework which includes both local cohomology and generalized
cohomology theories, since clearly such a generalization requires a
notion of `spectrum relative to a base space', or at least of `space
relative to a base space'.  In this paper we focus on spaces for
simplicity; we hope to deal with spectra in a later paper.

There are two general approaches to such a relative theory in the
literature: one involving `sheaves of spaces on $B$', or
\emph{homotopy sheaves} (also known as \emph{stacks}), such as that
of~\cite{jardine:simplicial-presheaves,lurie:higher-topoi}, and one
involving `spaces over $B$', or \emph{parametrized spaces}, such as
that of~\cite{maysig:pht}.  Formal comparisons of the two, however,
are difficult to find in the literature.  In this paper, we state and
prove such a comparison; our slogan is that \emph{parametrized spaces
  are equivalent to locally constant homotopy sheaves}.

Our inspiration comes from the well-known equivalence between the
following three categories.
\begin{enumerate}\renewcommand{\theenumi}{(\roman{enumi})}
\item Locally constant sheaves of sets on $B$.\label{item:lc-sh}
\item Covering spaces over $B$ (which are fibrations with discrete
  fibers).\label{item:cov-sp}
\item Sets with an action of $\pi_1(B)$.  If $B$ is not
  path-connected, we use instead the fundamental groupoid
  $\Pi_1(B)$.\label{item:pi1-sets}
\end{enumerate}
Our goal is to prove a `homotopical' version of this.  Specifically,
we will show that the following three homotopy theories are
equivalent.
\begin{enumerate}\renewcommand{\theenumi}{(\alph{enumi})}
\item Homotopy sheaves on $B$ which are `locally constant'.\label{item:lc-hosh}
\item Fibrations over $B$.\label{item:param-sp}
\item (If $B$ is path-connected) spaces with an action of $\Omega B$.
  We regard $\Omega B$ as representing the automorphisms of the base
  point of `$\Pi_\infty(B)$', the `fundamental $\infty$-groupoid' of
  $B$.\label{item:piinf-sp}
\end{enumerate}
Often, of course, we use a larger category of models.  We find the
homotopy sheaves as the fibrant objects in a model structure on the
category of simplicial presheaves, and the fibrations over $B$ as the
fibrant objects in a model structure on the category of all spaces
over $B$.  We also refer to this latter as the homotopy theory of
\emph{parametrized spaces}.

Our method of proof is also similar to the `0-dimensional' version.
One way to prove the equivalence between~\ref{item:lc-sh}
and~\ref{item:cov-sp} is to first prove that the category of all
sheaves of sets on $B$ is equivalent to the category of local
homeomorphisms (or `etale spaces') over $B$, and then identify the
covering spaces as the local homeomorphisms which are `locally
constant'.  Analogously, we will prove the equivalence
between~\ref{item:lc-hosh} and~\ref{item:param-sp} by using a
different model structure on spaces over $B$, due
to~\cite{ij:ex-spaces}, whose homotopy theory is equivalent to that of
homotopy sheaves and in which all objects are fibrant.  We will show
that a model structure for spaces parametrized over $B$ embeds into
this model structure, and that its image consists of the `locally
constant' homotopy sheaves.


Likewise, the equivalence between~\ref{item:cov-sp}
and~\ref{item:pi1-sets} goes by taking the fiber of a covering space,
with action induced by path-lifting around loops in $B$.  We prove the
equivalence between~\ref{item:param-sp} and~\ref{item:piinf-sp} using
a homotopical version of this.

Our motivating analogy also suggests other aspects of the relationship
between homotopy sheaves and parametrized spaces.  For example, since
covering spaces over $B$ are equivalent to $\pi_1(B)$-sets, they
depend only on homotopy-theoretic information about $B$, while the
category of all sheaves of sets on $B$ determines $B$ essentially up
to homeomorphism.  Analogously, the homotopy theory of parametrized
spaces is invariant under weak equivalences of the base space, while
that of homotopy sheaves is not.  This is not a problem with either
approach, merely a difference in emphasis: homotopy theorists are only
interested in spaces as homotopy types, while sheaf theorists are
interested in spaces, such as spectra of rings, which carry more
information than their ordinary weak homotopy type can support.

Another important difference has to do with base change functors and
homology and cohomology.  Under the correspondence
between~\ref{item:lc-sh} and~\ref{item:cov-sp}, the sheaf cohomology
of a locally constant sheaf of groups on $B$ is identified with the
local cohomology of $B$ with coefficients in the corresponding local
system.  However, while it is easy to also define \emph{homology} with
local coefficients, it is quite difficult to define `sheaf homology'
in general, and this difference carries over to the homotopical
version.

The analogues of homology and cohomology in relative homotopy theory
are, respectively, derived left and right adjoints $f_!$ and $f_*$ to
the pullback functor $f^*$ for a map $f$ of base spaces; when $f$ is
the projection $r\maps B\to *$, we expect to recover homology from
$r_!$ and cohomology from $r_*$.  For homotopy sheaves, the adjunction
$f^*\adj f_*$ is well-behaved, but in general $f^*$ has no left
adjoint.  For parametrized spaces, on the other hand, the adjunction
$f_!\adj f^*$ is well-behaved, while the right adjoint $f_*$ is harder
to get a handle on.  A right adjoint $f_*$ on the homotopy-category
level was shown to exist in~\cite{maysig:pht} only by using Brown
representability, and only on connected spaces.

One motivation for our comparison result is the hope to shed some
light on the right adjoint $f_*$ for parametrized spaces.  We will
show that the derived functor $f^*$ for parametrized spaces agrees
with the derived functor $f^*$ for the corresponding locally constant
homotopy sheaves; in particular, the $f^*$ for homotopy sheaves
preserves locally constant objects.  The functors $f_*$, on the other
hand, agree whenever $f$ is a fibration between locally compact CW
complexes, but in general the $f_*$ for homotopy sheaves need not
preserve locally constant objects.

It follows that for such fibrations, the $f_*$ for parametrized spaces
can be computed by passing through homotopy sheaves.  This is not a
huge gain in generality, since $f_*$ can be computed by existing
methods when $f$ is a bundle of cell complexes, but we give some
motivation for believing that it is almost best possible.  We also
give examples in which the $f_*$ for homotopy sheaves is very
different from the $f_*$ for parametrized spaces.

The equivalence between~\ref{item:param-sp} and~\ref{item:piinf-sp} is
more promising for the construction of $f_*$, at least at a formal
level.  We will show that this equivalence preserves all the base
change functors, and in particular that a derived right adjoint $f_*$
can always be constructed for parametrized spaces by passing through
$\Omega B$-spaces.  This requires no assumptions on the map $f$ and no
connectivity assumptions on the spaces involved.  However, this
equivalence involves a chain of two adjunctions in different
directions, so to actually compute $f_*$ in this way may be
impractical.

The plan of this paper is as follows.  In
\S\S\ref{sec:point-set-topology}--\ref{sec:model-struct-param} we
recall some facts about point-set topology and model structures for
parametrized spaces.
\S\S\ref{sec:model-struct-homot}--\ref{sec:base-change} are then
devoted to the equivalence between~\ref{item:lc-hosh}
and~\ref{item:param-sp}.  In \S\ref{sec:model-struct-homot} we define
a model structure for homotopy sheaves using simplicial presheaves and
compare it to the model structure from~\cite{ij:ex-spaces} which uses
actual spaces over a base space.  In \S\ref{sec:comparison}, we prove
that the homotopy theory of parametrized spaces embeds into that of
homotopy sheaves, and in \S\ref{sec:essimg} we prove that the image
consists of the `locally constant' homotopy sheaves.  Then in
\S\ref{sec:base-change} we compare the base change functors in the two
situations.
  
\S\S\ref{sec:g-spaces-bg}--\ref{sec:base-change-g} deal with the
equivalence between~\ref{item:param-sp} and~\ref{item:piinf-sp}.  In
\S\ref{sec:g-spaces-bg} we prove that when $G$ is a grouplike
topological monoid, such as a Moore loop space $\Omega A$, the
homotopy theories of $G$-spaces (with the underlying weak
equivalences, not the weak equivalences usually used in equivariant
homotopy theory) and of spaces parametrized over $BG$ are equivalent.
Since any connected space $A$ is weakly equivalent to $B(\Omega A)$,
and parametrized spaces are invariant under weak equivalence of the
base space, this shows that spaces over $A$ are equivalent to $\Omega
A$-spaces.  Finally, in \S\ref{sec:base-change-g} we show that this
equivalence preserves all the base change functors.

An important technical tool in our work is a new model structure for
topological spaces discovered by Cole~\cite{cole:mixed}, obtained by
\emph{mixing} the `standard' model structure constructed by
Quillen~\cite{quillen:htpical-alg} with the `classical' model
structure constructed by Str\o m~\cite{strom:the-htpy-cat}.  In
Cole's model structure the weak equivalences are the \emph{weak}
homotopy equivalences, while the fibrations are the \emph{Hurewicz}
fibrations, and the cofibrant objects are the spaces of the homotopy
type of a CW complex.  This model structure is arguably closer to
classical homotopy theory than is the standard model structure, and
its cofibrant and fibrant objects are also better behaved and
preserved by more constructions.

Results analogous to ours can be found in~\cite{toen:locconst}, which
works with simplicial sets and Kan fibrations rather than topological
spaces and Hurewicz fibrations.  This provides another formalization
of the slogan that parametrized spaces model locally constant homotopy
sheaves.  However, a topological approach is of independent interest
for many reasons.

I would like to thank my advisor, Peter May, for useful conversations
and for suggesting an improvement of \autoref{g-bg-equiv}; Mark
Johnson, for several helpful comments; and the anonymous referee, for
helpful suggestions on exposition.

\section{Point-set topology}
\label{sec:point-set-topology}

In the parametrized world there are always some point-set topological
issues that must be dealt with.  It is by now generally accepted that
a good category of topological spaces for homotopy theory must be
cartesian closed, and the most common choice is the category of
\emph{compactly generated} spaces; that is, weak Hausdorff $k$-spaces
(see~\cite[Ch.~5]{may:concise}).  However, in the parametrized case
one wants the category of spaces \emph{over} every base space $B$ to
also be cartesian closed.  Standard categorical arguments show that
this is equivalent to the existence, for any map $f\maps A\to B$ of
base spaces, of a right adjoint $f_*$ to the pullback functor $f^*$.

However, this extra desideratum is false for compactly generated
spaces.  Various remedies are possible.  One is to restrict the
structure maps $X\to B$ of spaces over $B$, and the transition maps
$f\maps A\to B$ of base spaces, to be open maps, as is done
in~\cite{ij:ex-spaces}.  However, in some cases this is too
restrictive; for example, it disallows diagonal maps $\Delta\maps B\to
B\times B$.  Another solution is to use a topological quasitopos, such
as pseudotopological spaces (see~\cite{wyler:quasitopoi}) or
subsequential spaces (see~\cite{ptj:topological-topos}).

We adopt instead the solution used in~\cite{maysig:pht}: we require
base spaces to be compactly generated, but allow total spaces to be
arbitrary $k$-spaces, not necessarily weak Hausdorff.  The references
given in~\cite[\S1.3]{maysig:pht} show that if $B$ is compactly
generated, the category $\sK/B$ of $k$-spaces over $B$ is cartesian
closed, and if $f\maps A\to B$ is a continuous map between compactly
generated spaces, the pullback functor $f^*\maps \sK/B\to\sK/A$ has
not only a left adjoint $f_!$ but a right adjoint $f_*$.  The same is
true if we consider the categories $\sK_B$ of sectioned spaces over
$B$.

The left adjoint $f_!\maps \sK/A\to\sK/B$ is simply given by
composition with $f$.  In the sectioned case, $f_!\maps \sK_A\to\sK_B$
is defined by a pushout, which in the case of the map $r\maps B\to *$
simply quotients out the section.

We think of the right adjoint $f_*$ as a `space of relative sections'.
When $f$ is the map $A\to *$, the space $f_* X$ is simply the space of
global sections of $X\too[p] A$; that is, the subspace of $\sK(A,X)$
consisting of the maps $A\too[s] X$ such that $ps=1_A$.

From now on, when we speak of \emph{a space over $B$} it is to be
understood that $B$ is compactly generated and the total space is a
$k$-space.  Although our point-set conventions are different than
those of~\cite{ij:ex-spaces}, it is readily seen that all the proofs
in~\cite{ij:ex-spaces} carry over without difficulty to our setting,
so this will be our last comment on the difference.

One fundamental result we will need is the following.  Recall (for
example, from~\cite[\S1.5]{fp:cellular-structures}) that the following
conditions on a CW complex $X$ are equivalent.
\begin{enumerate}
\item $X$ is \textbf{locally finite}, meaning that each point has a
  neighborhood which intersects only finitely many cells.
\item $X$ is locally compact.
\item $X$ is metrizable.
\item $X$ is first countable.
\end{enumerate}

\begin{thm}[{\cite[6.6]{ij:ex-spaces}}]\label{locfincw->openhtcw}
  If $X$ is a CW complex with the above properties, then any open
  subspace of $X$ has the homotopy type of a CW complex.
\end{thm}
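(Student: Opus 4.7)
The plan is to route through the theory of absolute neighborhood retracts (ANRs), since an open subset of a CW complex need not inherit a CW structure but can be shown to have the homotopy type of one by ANR methods.

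First I would establish that every locally finite CW complex $X$ is a metrizable ANR. Metrizability is one of the equivalent conditions listed just before the statement, so the content here is that locally finite CW complexes are ANRs. This is a classical fact; the standard route is to build, inductively on skeleta, a neighborhood retract of a closed embedding of $X$ into a convex subset of a normed linear space, using the local finiteness to glue the local retractions without running into point-set pathology. Equivalently, one may invoke the Dugundji/Hanner characterization: a metrizable space is an ANR iff it is locally an ANR, and each cell in a locally finite CW complex has a Euclidean neighborhood.

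Second, I would show that the open subspace $U \subseteq X$ is itself a metrizable ANR. Metrizability is immediate since subspaces of metrizable spaces are metrizable. For the ANR property, embed $X$ as a closed subspace of a normed linear space $E$ (Arens--Eells); by the ANR property, there is an open neighborhood $V \supseteq X$ in $E$ and a retraction $r \maps V \to X$. Then $r^{-1}(U)$ is open in $E$ and $r$ restricts to a retraction $r^{-1}(U) \to U$, exhibiting $U$ as a retract of an open subset of $E$, hence an ANR.

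Finally, I would invoke Milnor's theorem: every ANR has the homotopy type of a CW complex. Applying this to $U$ concludes the proof.

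The main obstacle, and the least routine step, is the first one — verifying that a locally finite CW complex is an ANR. The inductive construction of the neighborhood retract is delicate because one must ensure that the neighborhoods chosen at each stage shrink compatibly so that the resulting retraction is continuous at points in infinitely many cells; local finiteness is exactly what prevents the construction from collapsing. Once ANR-hood is in hand, the remaining two steps (openness preserving ANRs, and Milnor's theorem) are standard and clean.
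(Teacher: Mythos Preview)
Your proposal is correct and follows essentially the same three-step route as the paper: show $X$ is a metrizable ANR, show open subsets of ANRs are ANRs, then invoke the equivalence between ANR homotopy type and CW homotopy type. The only cosmetic difference is in how the first step is justified---the paper quotes Hyman's result that every CW complex is an ANR(M) and then uses metrizability to conclude it is a metric ANR, whereas you propose the Dugundji/Hanner local criterion or a direct skeletal induction; both are standard and yield the same conclusion.
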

\begin{proof}
  Given a class $\sC$ of spaces, in~\cite{hyman:m-spaces} a space is
  defined to be an \emph{ANR(\sC)} (absolute neighborhood retract) if
  it is a neighborhood retract of every space in \sC\ that contains it
  as a closed subset.  If \sC\ is the class of metric spaces, an
  ANR(\sC) is called a \emph{metric ANR} or just an \emph{ANR}.
  By~\cite[11.4]{hyman:m-spaces}, every CW complex is an ANR(M), where
  M is the class of `M-spaces' defined there.  The remarks
  before~\cite[10.4]{hyman:m-spaces} show that an ANR(M) is a metric
  ANR just when it is metrizable.

  Thus, since our CW complex $X$ is assumed metrizable, it is an ANR.
  But by~\cite[A.6.4]{fp:cellular-structures}, any open subset of an
  ANR is an ANR, and by~\cite[5.2.1]{fp:cellular-structures} spaces of
  the homotopy type of CW complexes coincide with spaces of the
  homotopy type of ANRs.
\end{proof}

This is important because in order to compare spaces over $X$ to
sheaves on $X$, we need to consider \emph{sections} over open subsets
$U\subset X$.  Of course, sections over $U$ are particular maps out of
$U$, and we know that only spaces of the homotopy type of CW complexes
are `homotopically good' for mapping out of.

\section{Model structures for parametrized spaces}
\label{sec:model-struct-param}

There are several model structures on the category $\sK/B$ of spaces
over $B$.  Any model structure on \sK\ gives rise, by standard
arguments, to a model structure on $\sK/B$.  The most well-known model
structures on \sK\ are the following.
\begin{enumerate}
\item The \textbf{standard} or $q$-model structure, in which the weak
  equivalences are the weak homotopy equivalences, the fibrations are
  the Serre fibrations, and the cofibrations are the retracts of
  relative cell complexes.  This is the model structure originally
  constructed by Quillen in~\cite{quillen:htpical-alg}.
\item The \textbf{classical} or $h$-model structure, in which the weak
  equivalences are the homotopy equivalences, the fibrations are the
  Hurewicz fibrations (or `$h$-fibrations'), and the cofibrations are
  the closed Hurewicz cofibrations.  This model structure was
  constructed in~\cite{strom:the-htpy-cat}.
\end{enumerate}
However, as mentioned in \S\ref{sec:introduction}, there is also a
\textbf{mixed} model structure, which was discovered by Cole.

\begin{thm}[\cite{cole:mixed}]\label{thm:mixed}
  Suppose that a category \sC\ has two model structures, called the
  $q$-model structure and the $h$-model structure, such that
  \begin{itemize}
  \item Every $h$-equivalence is a $q$-equivalence, and
  \item Every $h$-fibration is a $q$-fibration.
  \end{itemize}
  Then \sC\ also has a \textbf{mixed} or \textbf{$m$-model structure}
  in which
  \begin{itemize}
  \item The weak equivalences are the $q$-equivalences,
  \item The fibrations are the $h$-fibrations,
  \item The cofibrations are the $h$-cofibrations which factor as a
    $q$-cofibration followed by an $h$-equivalence, and the
    $m$-cofibrant objects are the $h$-cofibrant objects which have the
    $h$-homotopy type of a $q$-cofibrant object.
  \end{itemize}
  If the $q$-model structure is left or right proper, so is the
  $m$-model structure.  If the $q$- and $h$-model structures are both
  monoidal, so is the $m$-model structure.
\end{thm}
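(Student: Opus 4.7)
The plan is to verify the four model-category axioms. First I would record two dual inclusions implied by the hypotheses: every trivial $h$-fibration is a trivial $q$-fibration (being an $h$-fibration and an $h$-equivalence, hence a $q$-fibration and a $q$-equivalence), and by the left lifting property characterization of cofibrations, every $q$-cofibration is an $h$-cofibration. The 2-out-of-3 axiom for $m$-equivalences is inherited directly from the $q$-model structure, and $m$-equivalences and $m$-fibrations are closed under retracts from their source model structures.

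The factorization axioms are obtained by interleaving the factorizations of the two given structures. For a $(m\text{-cof})(\text{trivial }m\text{-fib})$ factorization of a map $f$, first take a $q$-model factorization $f = pi$ with $i$ a $q$-cofibration and $p$ a trivial $q$-fibration, then factor $p$ in the $h$-model as $p = qj$ with $j$ a trivial $h$-cofibration and $q$ an $h$-fibration. By 2-out-of-3, $q$ is a $q$-equivalence and hence a trivial $m$-fibration, while $ji$ is an $h$-cofibration (using $q$-cof $\subseteq$ $h$-cof) whose decomposition $j \circ i$ exhibits the required $(q\text{-cof})(h\text{-eq})$ form. The $(\text{trivial }m\text{-cof})(m\text{-fib})$ factorization arises symmetrically, starting from a $q$-factorization with $i$ a trivial $q$-cofibration; the resulting trivial $m$-cofibration has the additional property that it lifts against every $h$-fibration, since its two factors do (a trivial $q$-cofibration lifts against any $q$-fibration, hence against any $h$-fibration; a trivial $h$-cofibration lifts against any $h$-fibration).

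The crux is the pair of lifting axioms. For $(m\text{-cof}, \text{trivial }m\text{-fib})$, given an $m$-cofibration $i \maps A \to B$ factored as $A \too[j] A' \too[k] B$ and a trivial $m$-fibration $p \maps X \to Y$, which is also a trivial $q$-fibration, I would first lift $j$ against $p$ in the $q$-model to obtain an intermediate map $A' \to X$, then extend along the $h$-equivalence $k$ using classical Hurewicz technology: the fact that $i$ is an $h$-cofibration and $k$ is an $h$-equivalence means $k$ admits a homotopy inverse rel $A$, and the $h$-fibration property of $p$ together with the homotopy extension property of $i$ rectifies the resulting candidate into a strict lift $B \to X$. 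The second lifting axiom $(\text{trivial }m\text{-cof}, m\text{-fib})$ follows by the retract argument: factoring a trivial $m$-cofibration $f$ as $g \circ h$ with $h$ produced by the symmetric factorization above, the map $g$ is forced to be a trivial $m$-fibration by 2-out-of-3, so the first lifting axiom realizes $f$ as a retract of $h$ and $f$ inherits $h$'s lifting property against every $m$-fibration. Left or right properness and monoidality transfer by analyzing each $m$-cofibration through its $(q\text{-cof})(h\text{-eq})$ factorization, using the corresponding properties of the $q$- and $h$-model structures together with the good behavior of $h$-cofibrations under pushouts. The main obstacle is the first lifting axiom, which rests on the classical but delicate fact that an $h$-equivalence sitting under an $h$-cofibration is a strong $h$-homotopy equivalence rel the cofibered object; the remainder of the proof is a formal interleaving of the two given model structures.
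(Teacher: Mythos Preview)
The paper does not prove this theorem; it is quoted with attribution to Cole and used as a black box throughout, so there is no argument in the paper against which to compare your sketch.

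That said, your outline captures the shape of the standard proof and correctly isolates the first lifting axiom as the heart of the matter. One genuine gap: the step ``$k$ admits a homotopy inverse rel $A$'' is the under-category Whitehead theorem, and in an arbitrary model category this requires $A'$ and $B$ to be $h$-fibrant as well as $h$-cofibrant under $A$. For topological spaces every object is $h$-fibrant and your argument goes through, but the theorem is stated for general $\sC$. A clean repair that avoids cofiber homotopy equivalences altogether: factor the $h$-equivalence $k$ in the $h$-structure as a trivial $h$-cofibration $k_1\maps A'\to A''$ followed by a (necessarily trivial) $h$-fibration $k_2\maps A''\to B$; extend your partial lift $\ell$ along $k_1$ using its lifting property against the $h$-fibration $p$ to get $\ell'\maps A''\to X$; then lift the $h$-cofibration $i$ against the trivial $h$-fibration $k_2$ to obtain a section $s\maps B\to A''$ under $A$; the composite $\ell' s$ is the desired diagonal. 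A smaller point: you do not address retract closure for $m$-cofibrations, and the description ``$h$-cofibration factoring as a $q$-cofibration followed by an $h$-equivalence'' is not manifestly retract-closed. This is usually handled by taking the lifting property against trivial $m$-fibrations as the \emph{definition} of $m$-cofibration, establishing the model structure, and only then proving the explicit characterization.
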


As is evident, we prefix model-theoretic words like `equivalence',
`fibration', `cofibration', and `cofibrant' with a letter to indicate
which model structure we are referring to.  However, we continue to
refer to $q$-equivalences and $h$-fibrations rather than
$m$-equivalences and $m$-fibrations.  If we want to make clear which
model structures are being mixed, we may refer to the mixing of the
$q$- and $h$-model structures as the $m_{q,h}$-model structure.

In the case of \sK, the standard and classical model structures mix to
give a model structure in which the weak equivalences are the
\emph{weak} homotopy equivalences and the fibrations are the Hurewicz
fibrations.  A map $f\maps A\to X$ is an $m$-cofibration if and only
if it is a Hurewicz cofibration that is cofiber homotopy equivalent
under $A$ to a relative CW complex.  In particular, the $m$-cofibrant
objects are the spaces of the homotopy type of a CW complex; thus we
can rephrase \autoref{locfincw->openhtcw} by saying that a locally
compact CW complex is \textbf{hereditarily $m$-cofibrant}.  Since the
$q$- and $h$-model structures on \sK\ are monoidal, so is the
$m$-model structure.

The mixed model structure packages classical information in an
abstract way.  For example, it is true in the generality of
\autoref{thm:mixed} that a $q$-equivalence between $m$-cofibrant
objects is an $h$-equivalence.  Note that the identity functor is a
Quillen equivalence between the $m$- and $q$-model structures, and
that unlike the $q$-model structure, neither the $h$- nor the
$m$-model structure on \sK\ is cofibrantly generated.

We denote the model structures on $\sK/B$ obtained from the $h$, $q$,
and $m$-model structures on \sK\ by the same letters.  Note that the
$m$-model structure on $\sK/B$ induced by the $m$-model structure on
\sK\ is the same as the model structure obtained by mixing the $q$-
and $h$-model structures on $\sK/B$.

We also have a \textbf{fiberwise} or $f$-model structure on $\sK/B$,
whose weak equivalences are the \emph{fiberwise} homotopy equivalences
(hereafter \emph{$f$-equivalences}); see~\cite[\S5.1]{maysig:pht} for
more details.  However, the $q$-model structure does not mix with the
$f$-model structure, since not every $f$-fibration is a $q$-fibration.

The homotopy theory on $\sK/B$ we are interested in is that modeled by
the Quillen equivalent $q$- and $m$-model structures.  It was observed
in~\cite{maysig:pht} that the $q$-model structure on $\sK/B$ is not
good enough for some purposes because it has too many cofibrations,
and of course the $m$-structure has even more.  Thus, a main technical
result of~\cite{maysig:pht} was the construction of a Quillen
equivalent `$qf$-model structure' with better formal properties.  The
$qf$-structure will not play any role for us, however, since we will
be more interested in controlling the fibrations than the
cofibrations.  For this, the best choice is the $m$-model structure,
in which the fibrant objects are the Hurewicz fibrations over $B$.

By standard arguments, each model structure on $\sK/B$ gives rise to a
corresponding model structure on the category $\sK_B$ of
\emph{sectioned} spaces over $B$, since the latter is just the
category of pointed objects (that is, objects under the terminal
object) in $\sK/B$.  In $\sK_B$ one may also consider `fiberwise
pointed' homotopy equivalences, fibrations, and so on; these form an
`$fp$-model structure' in the category $\sU_B$ of compactly generated
spaces over $B$, but it is unknown whether they do so in $\sK_B$;
see~\cite[5.2.9]{maysig:pht}.

Recall that for any map $f\maps A\to B$, we have a string of
adjunctions $f_!\adj f^*\adj f_*$ at the point-set level.

\begin{prp}[{\cite[\S7.3]{maysig:pht}}]
  The adjunction $f_!\adj f^*$ is Quillen for the $q$- and $m$-model
  structures.  If $f$ is a $q$-equivalence, then $f_!\adj f^*$ is a
  Quillen equivalence.  If $f$ is a bundle whose fibers are cell
  complexes, then $f^*\adj f_*$ is Quillen for the $q$-model
  structures.
\end{prp}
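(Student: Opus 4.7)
For the first claim, I would use that the $q$- and $m$-fibrations (resp.\ weak equivalences) in $\sK/B$ coincide with those maps whose underlying maps in $\sK$ have the corresponding property. Then I would verify that the right adjoint $f^*$, which is strict pullback along $f$, preserves (acyclic) fibrations: Serre fibrations pull back to Serre fibrations; Hurewicz fibrations pull back to Hurewicz fibrations by the classical pullback stability of $h$-fibrations; and acyclic fibrations in either structure are acyclic $q$-fibrations in $\sK$, which form the right class of a weak factorization system and are therefore pullback-stable. This makes $f^*$ right Quillen for both the $q$- and $m$-structures on $\sK/B$.

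For the second claim, I would check that the derived unit of $f_!\adj f^*$ is a $q$-equivalence at every $q$-cofibrant object. Given cofibrant $X$ over $A$, factor $f_!X\to B$ as an acyclic $q$-cofibration $X\we\tilde X$ followed by a $q$-fibration $\tilde X\to B$; the derived unit at $X$ then becomes $X\to \tilde X\times_B A$. By right properness of the $q$-model structure on $\sK$, pulling back the $q$-fibration $\tilde X\to B$ along the $q$-equivalence $f\maps A\to B$ yields a $q$-equivalence $\tilde X\times_B A\to\tilde X$; 2-out-of-3 applied to the factorization $X\to \tilde X\times_B A\to\tilde X$ (whose composite is the $q$-equivalence $X\we\tilde X$) finishes the argument. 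The same reasoning applies in the $m$-structure since its weak equivalences coincide with the $q$-equivalences.

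For the third claim, $f^*$ is now the left adjoint of $f^*\adj f_*$, so I would verify that it preserves the generating $q$-cofibrations $(S^{n-1}\to B)\into(D^n\to B)$ and the analogous acyclic generators, indexed by arbitrary structure maps out of the disc. Since $D^n$ is contractible and paracompact, the pullback bundle $f^{-1}(D^n)\to D^n$ is trivializable, so $f^*$ sends such a generator to a map homeomorphic over the disc to $(S^{n-1}\times F\into D^n\times F)$, where $F$ is the bundle fiber. The hypothesis that $F$ is a cell complex then makes this a $q$-cofibration in $\sK$ by monoidality of the $q$-model structure on $\sK$, hence a $q$-cofibration in $\sK/A$; acyclic generators are handled identically. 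I expect the main technical friction to be precisely here: justifying the bundle triviality over each $D^n$ (contractibility and paracompactness of the disc suffice for locally trivial bundles) and correctly identifying the pullback with the product sitting over $A$. Everything else reduces to standard facts: pullback stability of fibrations for (i), right properness for (ii), and monoidality of the $q$-structure for (iii).
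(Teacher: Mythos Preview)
The paper does not give its own proof of this proposition; it simply cites \cite[\S7.3]{maysig:pht}.  Your outline is essentially the standard argument found there, and parts (i) and (iii) are correct as stated.  For (i), pullback stability of (acyclic) fibrations is exactly the right point, and your observation that acyclic $m$-fibrations are in particular acyclic $q$-fibrations handles both model structures at once.  For (iii), trivializing the bundle over each disc and using that the fiber is $q$-cofibrant is precisely the argument in the cited reference; your identification of the technical friction is accurate.

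There is one genuine omission in (ii): verifying that the derived unit is a $q$-equivalence at every cofibrant object is not, by itself, enough to conclude that the adjunction is a Quillen equivalence.  You also need to know that $f^*$ reflects weak equivalences between fibrant objects (this is the criterion in \cite[1.3.16]{hovey:modelcats}), or equivalently that the derived counit is an equivalence.  Fortunately this follows from the same right-properness argument you already used: for fibrant $Y,Y'$ over $B$, the maps $f^*Y\to Y$ and $f^*Y'\to Y'$ are $q$-equivalences (pullback of the fibrations along the $q$-equivalence $f$), so by 2-out-of-3 a map $g\maps Y\to Y'$ is a $q$-equivalence if and only if $f^*g$ is.  Alternatively, since $f_!$ is just postcomposition with $f$ and therefore creates all $q$-equivalences, you could instead check the derived counit directly.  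Either way, one more sentence closes the gap.
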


The results in~\cite{maysig:pht} are only stated for the sectioned
case of $\sK_B$, but the proofs remain valid in the unsectioned case
of $\sK/B$.

This implies that we always have derived adjunctions $\bfL_q f_! \adj
\bfR_q f^*$ at the level of homotopy categories, and that when $f$ is
a bundle of cell complexes, we also have a derived adjunction $\bfL_q
f^*\adj \bfR_q f_*$.  Because in the latter case $f^*$ is left and
right Quillen for the same model structure, its left and right derived
functors agree.  We decorate \bfL\ and \bfR\ with a $q$ to remind us
that these are derived functors with respect to the $q$-equivalences;
since left and right derived functors are determined by the weak
equivalences of a model structure, the derived functors are the same
whether we use the $q$- or the $m$-model structures.

For maps $f$ other than bundles of cell complexes, the functor $f_*$
is difficult to get a handle on homotopically.  It is proven
in~\cite[9.3.2]{maysig:pht}, using Brown representability, that in the
sectioned case, for any map $f$ the functor $\bfR_q f^*$ has a partial
right adjoint defined on connected objects.  However, in general no
relationship between this functor and the point-set level functor
$f_*$ is known.  In \S\ref{sec:base-change} and
\S\ref{sec:base-change-g} we will see that this problem can be
partially remedied by passing across one or the other of our
equivalences.

\section{Model structures for homotopy sheaves}
\label{sec:model-struct-homot}

There are several ways to make the notion of `homotopy sheaves'
precise.  Probably the most common approach is the following.  Let $B$
be a space and let $\sB$ denote the poset of open sets in $B$.  We
write $\calS$ for the category of simplicial sets, equipped with its
usual model structure.  The category $\calS^{\sB\op}$ of simplicial
presheaves on \sB\ then has a \emph{projective} model structure in
which the weak equivalences and fibrations are objectwise.  We now
localize this structure at a suitable set of maps to obtain a new
model structure whose fibrant objects may be called `homotopy
sheaves'.

First we introduce some notation.  We write $y\maps \sB\into
\Set^{\sB\op}$ for the Yoneda embedding, so that $yU$ is the presheaf
represented by an open subset $U\subset B$, i.e.\
\[yU(V) =
\begin{cases}
  \{\ast\} & \text{if } V\subset U\\
  \emptyset & \text{otherwise}.
\end{cases}
\]
If $U = \bigcup_{\alpha\in \calA} U_\alpha$ is an open cover of an
open subset $U\subset B$, we write $y\calA$ for the following
subfunctor of $yU$:
\[y\calA(V) =
\begin{cases}
  \{\ast\} & \text{if } V\subset U_\alpha \text{ for some }\alpha\in \calA \\
  \emptyset & \text{otherwise}.
\end{cases}
\]
We write $\sI_B$ for the set of all inclusions
\begin{equation}
  y\calA \into yU\label{eq:sieve}
\end{equation}
ranging over all open covers $U = \bigcup_{\alpha\in \calA} U_\alpha$
of open subsets $U\subset B$.

A presheaf of sets is a sheaf, in the usual sense, just when it sees
all the maps in $\sI_B$ as isomorphisms.  Thus, it makes sense to
localize $\calS^{\sB\op}$ at $\sI_B$ (considered as a set of maps
between discrete simplicial presheaves) and call the resulting model
structure the \textbf{homotopy sheaf model structure}.  A simplicial
presheaf is fibrant in this model structure when it is objectwise
fibrant and moreover sees all the maps $\sI_B$ as weak equivalences;
we call such an object a \textbf{homotopy sheaf}.  We denote the
homotopy category of this model structure by $\HoSh(B)$.

\begin{rmk}\label{top-hosh}
  We could also, if we wished, use the category $\sK^{\sB\op}$ of
  presheaves of topological spaces.  The standard Quillen equivalence
  \[|-|\maps \calS\toot\sK \spam S
  \]
  lifts to a Quillen equivalence between the projective model
  structures on $\calS^{\sB\op}$ and $\sK^{\sB\op}$, and thence to a
  Quillen equivalence between homotopy sheaf model structures.  In
  this paper we will use the simplicial version, because it is easier
  to write down explicit projective-cofibrant replacements.  However,
  in an equivariant context the discrete category \sB\ may need to be
  replaced by a topologically enriched category, in which case the use
  of spaces rather than simplicial sets would become important.
\end{rmk}

\begin{rmk}
  The above construction is the same idea followed
  in~\cite{lurie:higher-topoi}, although there the localization is
  done using quasi-categories rather than model categories.  However,
  the elements of the model-categorical approach can be found
  in~\cite[\S7.1]{lurie:higher-topoi}.  This model structure for
  homotopy sheaves is not equivalent to that
  of~\cite{jardine:simplicial-presheaves}, which is constructed by
  localizing with respect to the larger class of \emph{hypercoverings}
  (see~\cite{dhi:hypercovers}).  Several arguments for using coverings
  rather than hypercoverings can be found
  in~\cite{lurie:higher-topoi}, in particular the result we quote
  below as \autoref{hosheaf<->ijstr}.
\end{rmk}

As we mentioned in \S\ref{sec:introduction}, however, there is also a
model structure on the category $\sK/B$ which is Quillen equivalent to
the above simplicial model for homotopy sheaves.  This model structure
was called the \emph{fine} model structure in~\cite{ij:ex-spaces}
where it was first defined; an essentially identical model structure
was also constructed in~\cite[\S7.1.2]{lurie:higher-topoi}.  We will
call it the \textbf{$ij$-model structure}.

If $X\to B$ is a space over $B$ and $U\subset B$ is an open set, we
denote by $\uG(U,X)$ the \emph{space of sections of $X$ over $U$}.  It
can be defined as the mapping space $\Map_B(U,X)$ in the topologically
enriched category $\sK/B$, or more abstractly as $r_* j^* X$ where
$j\maps U\into X$ is the inclusion and $r\maps U\to *$ is the
projection.  The underlying sets $\Gamma(U,X)$ of the spaces
$\uG(U,X)$ form the ordinary sheaf of sections of $X$, but the spaces
of sections carry more information about the topology of $X$.  This
enables us, for instance, to distinguish between $X$ and the local
homeomorphism (or `etale space') corresponding to its ordinary sheaf
of sections.

We now define the following classes of maps.
\begin{itemize}
\item The $ij$-equivalences are the maps $f$ over $B$ such that
  $\uG(U,f)$ is a $q$-equivalence for all open sets $U\subset B$.
\item The $ij$-fibrations are the maps $f$ over $B$ such that
  $\uG(U,f)$ is a $q$-fibration for all open $U\subset B$.  In
  particular, every space over $B$ is $ij$-fibrant.
\item Of course, the $ij$-cofibrations are the maps over $B$ having
  the left lifting property with respect to the $ij$-trivial
  $ij$-fibrations.
\end{itemize}
It is proven in~\cite{ij:ex-spaces} that the above classes of maps
define a topological model structure on $\sK/B$.  It is clearly
cofibrantly generated; a set of generating cofibrations is
\[\big\{U\times S^{n-1} \into U\times D^n : n\in \bbN, U\subset B \text{ open}\big\}\]
and a set of generating trivial cofibrations is
\[\big\{U\times D^{n-1} \into U\times D^n : n\in \bbN, U\subset B \text{
  open}\big\}.
\]
Since the generating cofibrations are $f$-cofibrations and the
generating trivial cofibrations are $f$-trivial $f$-cofibrations, the
identity functor of $\sK/B$ is left Quillen from the $ij$-model
structure to the $f$-model structure.  Moreover, we have the following
fact.

\begin{lemma}
  Any $f$-equivalence is an $ij$-equivalence, and a map between
  $ij$-cofibrant objects is an $ij$-equivalence if and only if it is
  an $f$-equivalence.
\end{lemma}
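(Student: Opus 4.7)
The plan is to prove the two assertions in sequence, with the first also handling the ``if'' direction of the second.

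For the first assertion, I would use the fact that $\uG(U,-) = \Map_B(U,-)$ is a topologically enriched functor from $\sK/B$ to $\sK$. Given an $f$-equivalence $f\maps X \to Y$ with fiberwise homotopy inverse $g\maps Y \to X$ and fiberwise homotopies exhibiting $fg \simeq 1_Y$ and $gf \simeq 1_X$, enriched functoriality sends these to honest topological homotopies between $\uG(U,f)\circ\uG(U,g)$ and $1_{\uG(U,Y)}$, and between $\uG(U,g)\circ\uG(U,f)$ and $1_{\uG(U,X)}$. Hence $\uG(U,f)$ is a topological homotopy equivalence, in particular a $q$-equivalence, for every open $U\subset B$; so $f$ is an $ij$-equivalence. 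This argument requires no cofibrancy hypothesis, so it also establishes the ``if'' direction of the second assertion.

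For the ``only if'' direction, I would apply Ken Brown's lemma to the identity functor $\Id\maps (\sK/B)_{ij} \to (\sK/B)_f$, which the discussion just above this lemma has already shown to be left Quillen (the generating $ij$-cofibrations are $f$-cofibrations, and the generating $ij$-trivial cofibrations are $f$-trivial $f$-cofibrations). Ken Brown's lemma then immediately gives that this left Quillen functor takes $ij$-equivalences between $ij$-cofibrant objects to $f$-equivalences, as required.

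There is essentially no obstacle here: the main content is already encapsulated in the left Quillen comparison noted by the paper, and everything else is formal. The only point to be careful about is that the two directions of the second assertion have different strengths --- the ``if'' direction holds unconditionally, coming from enriched functoriality of $\uG(U,-)$, while only the ``only if'' direction genuinely uses $ij$-cofibrancy through Ken Brown's lemma.
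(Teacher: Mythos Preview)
Your proposal is correct and matches the paper's own alternative argument almost verbatim: the paper observes that $\uG(U,-)=r_*j^*$ preserves (fiberwise) homotopies to get the first statement, and then deduces the second from the identity being left Quillen from the $ij$- to the $f$-structure, which is exactly your Ken Brown step. The paper also offers a one-line primary proof invoking the fact that the $ij$-structure is a topological (\sK-enriched) model category together with the topological analogue of Hirschhorn~9.5.16, which packages both directions at once since every object is $ij$-fibrant; your argument is the unpackaged version of this.
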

\begin{proof}
  This follows from the fact that the model structure is topological
  (that is, it is a \sK-model category), using the topological version
  of~\cite[9.5.16]{hirschhorn:modelcats}.  Alternately, for the first
  statement one may observe that $r_*$ and $j^*$ both preserve
  homotopies.  Therefore, if $g$ is an $f$-equivalence, $\Gamma(U,g) =
  r_*j^*g$ is an $h$-equivalence and hence a $q$-equivalence for all
  open $U\subset X$, and thus $g$ is an $ij$-equivalence.  The second
  statement follows from this and the fact that the identity functor
  is left Quillen from the $ij$-model structure to the $f$-model
  structure.
\end{proof}

This implies that unlike the $q$-model structure, the $ij$-model
structure on $\sK/B$ \emph{does} mix with the $f$-model structure to
give a mixed $m_{ij,f}$-model structure.  We will make no essential use
of this model structure, but its existence is interesting.

Analogously, in the induced $ij$-model structure on $\sK_B$, any
$fp$-equivalence (in fact, any $f$-equivalence) is an
$ij$-equivalence, and a map between $ij$-cofibrant objects is an
$ij$-equivalence if and only if it is an $fp$-equivalence.  Recall,
though, that an `$fp$-model structure' is not known to exist on
$\sK_B$, so we do not have any `$m_{ij,fp}$-model structure'.


We now describe the equivalence between the $ij$-model structure and
the homotopy sheaf model structure.  There is a canonical adjoint pair
\begin{equation}
  |-|_B\maps \calS^{\sB\op} \toot \sK/B\spam S^B.\label{eq:rel-gr-sing}
\end{equation}
The right adjoint, called the \emph{relative singular complex}, is
defined by
\begin{equation}
  \label{eq:rel-sing}
  S^B(X)(U) = S\big(\uG(U,X)\big),
\end{equation}
where $S$ is the usual total singular complex of a space.  The left
adjoint $|-|_B$ is called the \emph{relative geometric realization};
it takes a simplicial presheaf $F$ to the tensor product of functors
$i \ten_{\sB} |F|$, where $|F|$ denotes the objectwise geometric
realization of $F$ and $i\maps \sB\to \sK/B$ sends each open set
$U\subset B$ to itself, considered as a space over $B$.

We say that a topological space is \emph{hereditarily paracompact} if
all its open subsets are paracompact.  This is true, for example, if
the space is metrizable.  Moreover, all CW complexes are hereditarily
paracompact (see~\cite[\S1.3]{fp:cellular-structures}).  The version
of the following result in~\cite{lurie:higher-topoi} applies more
generally, but we will only be interested in the hereditarily
paracompact case.

\begin{thm}[{\cite[7.1.4.5]{lurie:higher-topoi}}]\label{hosheaf<->ijstr}
  If $B$ is Hausdorff and hereditarily paracompact, then the
  adjunction~(\ref{eq:rel-gr-sing}) defines a Quillen equivalence
  between the homotopy sheaf model structure and the $ij$-model
  structure.  In particular, we have $\HoSh(B)\eqv\Ho_{ij}(\sK/B)$.
\end{thm}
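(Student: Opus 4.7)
The plan follows the standard three-step recipe for producing a Quillen equivalence via left Bousfield localization. First, I would verify that the adjunction~(\ref{eq:rel-gr-sing}) is already Quillen between the \emph{projective} model structure on $\calS^{\sB\op}$ and the $ij$-model structure on $\sK/B$. By construction of the $ij$-structure, $\uG(U,-)$ sends $ij$-(trivial) fibrations to $q$-(trivial) fibrations, and the total singular complex functor $S$ then sends these to (trivial) Kan fibrations; hence $S^B$ is right Quillen objectwise.

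Second, by the universal property of the localization, this Quillen adjunction descends to one between the homotopy sheaf model structure and the $ij$-model structure as soon as $|-|_B$ sends each generating local equivalence $y\calA\into yU$ (after projective-cofibrant replacement) to an $ij$-equivalence. Representables $yU$ are already projective-cofibrant with $|yU|_B=U$ viewed as a space over $B$, and a projective-cofibrant replacement of $y\calA$ is the bar (or \v{C}ech) construction on the diagram of intersections, whose image under $|-|_B$ is the homotopy colimit in $\sK/B$ of the open sets $U_{\alpha_0}\cap\cdots\cap U_{\alpha_n}$. Applying $\uG(V,-)$ for each open $V\subset B$ reduces the desired $ij$-equivalence to the classical \v{C}ech nerve theorem: the homotopy colimit of the restricted cover $\{U_\alpha\cap V\}$ is weakly equivalent to $U\cap V$. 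Hereditary paracompactness enters here, guaranteeing that each restricted intersection is paracompact so that a subordinate partition of unity produces the nerve-theorem equivalence.

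Third, I would verify that the derived counit and unit are weak equivalences. On sections over any open $U$, the derived counit $|S^B(X)^{\mathrm{cof}}|_B\to X$ reduces to the standard counit $|S(\uG(U,X))|\to\uG(U,X)$ of the Quillen equivalence $|-|\dashv S$ between $\calS$ and $\sK$, so is a $q$-equivalence. For the derived unit $F\to S^B(|F^{\mathrm{cof}}|_B)$, a direct computation shows it is an isomorphism when $F=yU$, since $\uG(V,U)$ is a point if $V\subset U$ and empty otherwise, giving $S^B(U)=yU$; the general case follows by cell-by-cell extension, provided $S^B$ sends every space over $B$ to an object that is fibrant in the \emph{localized} homotopy sheaf structure.

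The main obstacle is precisely this last assertion: for every $Y\in\sK/B$, the simplicial presheaf $S^B(Y)$ must satisfy \v{C}ech descent, i.e.\ the canonical map $S(\uG(U,Y))\to\operatorname{holim}_\bullet S(\uG(U_{\alpha_0}\cap\cdots\cap U_{\alpha_n},Y))$ must be a weak equivalence for every open cover. This is a homotopy-coherent gluing statement for continuous sections, and it is where hereditary paracompactness is essential: numerable partitions of unity subordinate to every restricted cover patch locally-defined compatible section families into a global section up to contractible choice, yielding the required weak equivalence. Once descent is established, the three steps close up and \autoref{hosheaf<->ijstr} follows.
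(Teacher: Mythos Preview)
Your first step matches the paper. The remaining two steps each contain a gap arising from the same issue: $\uG(V,-)$ is a right adjoint and does not commute with the colimit-type constructions you implicitly pass it through.

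In step two, you write that applying $\uG(V,-)$ reduces the question to the \v{C}ech nerve theorem, i.e.\ that the homotopy colimit of $\{U_\alpha\cap V\}$ is weakly equivalent to $U\cap V$. But that is a statement about the homotopy type of the \emph{total space}, whereas an $ij$-equivalence demands a weak equivalence on \emph{section spaces} over every $V$; a $q$-equivalence of total spaces need not be an $ij$-equivalence. The paper argues instead that $|\widetilde{y\calA}|_B\to U$ is an $f$-equivalence: in the barycentric description of $|\widetilde{y\calA}|_B$, a section over $U$ is literally a partition of unity subordinate to $(U_\alpha)$, and the straight-line homotopy exhibits it as a fiberwise deformation retraction. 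Since $f$-equivalences are $ij$-equivalences, this settles step two; hereditary paracompactness enters only to guarantee that the partition of unity exists.

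In step three, your counit argument asserts that $\uG(U,|S^B(X)^{\mathrm{cof}}|_B)$ may be identified with $|S(\uG(U,X))|$. This is unjustified: $|-|_B$ is a coend and $\uG(U,-)$ does not preserve it, nor does it interact simply with projective-cofibrant replacement. The paper avoids the counit altogether. Because every object of $\sK/B$ is $ij$-fibrant and $S^B$ reflects $ij$-equivalences by definition, \cite[1.3.16]{hovey:modelcats} reduces the Quillen equivalence to showing that the unit $X\to S^B|X|_B$ is an $\sI_B$-local equivalence for projective-cofibrant $X$. That last statement---essentially the descent property you correctly single out as the main obstacle---is where the second, harder partition-of-unity argument lives, and the paper defers it to Lurie.
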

\begin{proof}[Idea of proof]
  We will not give the whole proof, but we give enough of it to
  explain the need for the hypotheses on $B$.  By definition of
  $ij$-equivalences and $ij$-fibrations, the adjunction is Quillen for
  the projective model structure and the $ij$-model structure.  Thus,
  to show that it is Quillen for the homotopy sheaf model structure,
  it suffices to show that the left derived functor of $|-|_B$ (with
  respect to the projective model structure) takes the maps $\sI_B$ to
  $ij$-equivalences.

  To calculate $\bfL_{\mathit{proj}} |-|_B$, we must replace objects by
  projective-cofibrant ones.  The presheaf $yU$ is already
  projective-cofibrant, but $y\calA$ is not.  We can give an explicit
  description of a cofibrant replacement for $y\calA$ as follows:
  choose a total ordering of \calA, and define $\widetilde{y\calA}$ to
  be the geometric realization of the following simplicial object in
  $\calS^{\sB\op}$.
  \begin{equation}
    \xymatrix{
      \dots \ar@{.}[r] &
      \displaystyle\coprod_{\alpha \le \beta \le \gamma}^{\phantom{\alpha}}
      y(U_\alpha \cap U_\beta \cap U_\gamma)
      \ar[r] \ar@<-2mm>[r] \ar@<2mm>[r] &
      \displaystyle\coprod_{\alpha \le \beta}^{\phantom{\alpha}}
      y(U_\alpha \cap U_\beta)
      \ar@<-1mm>[l] \ar@<1mm>[l]
      \ar@<-2mm>[r] \ar@<2mm>[r] &
      \displaystyle\coprod_{\alpha}^{\phantom{\alpha}}
      yU_\alpha
      \ar[l]
    }\label{eq:proj-coft-sieve}
  \end{equation}
  Since $y\calA$ is the coequalizer of the last two face maps, it
  admits a map from $\widetilde{y\calA}$, which is a
  projective-cofibrant replacement.

  Now, the relative realization of $yU$ is just the space $U$ over
  $B$.  The relative realization of $\widetilde{y\calA}$ can be
  described as a subset of $B\times [0,1]^{\calA}$ by using
  barycentric coordinates in each simplex.  The points of
  $|\widetilde{y\calA}|_B$ are those pairs $(b,\phi)$, where $b\in B$
  and $\phi\maps \calA\to [0,1]$, such that
  \begin{itemize}
  \item $\phi(\alpha)>0$ for only finitely many $\alpha$,
  \item if $\phi(\alpha)>0$, then $b\in U_\alpha$, and
  \item $\sum_{\alpha} \phi(\alpha) = 1$.
  \end{itemize}
  The topology of $\widetilde{y\calA}$ is generally finer than that
  induced from $B\times [0,1]^{\calA}$, but this is largely irrelevant
  since the identity map is a homotopy equivalence between the two
  topologies; see~\cite[3.3.7]{fp:cellular-structures}.

  The map $|\widetilde{y\calA}|_B \to |yU|_B = U$ is the obvious
  projection.  A section of this projection over $B$ is precisely a
  partition of unity subordinate to the cover $(U_\alpha)$.  Since by
  assumption, $U$ is Hausdorff and paracompact, such a section exists,
  and a linear homotopy shows that it is actually the inclusion of a
  fiberwise deformation retract.  Since $f$-equivalences are
  $ij$-equivalences, we see that $\bfL_{\mathit{proj}}|-|_B$ takes the
  maps in $\sI_B$ to $ij$-equivalences, and hence the adjunction is
  Quillen for the homotopy sheaf model structure.

  Finally, the functor $S^B$ reflects weak equivalences by definition
  of the $ij$-equiv\-alences.  Thus, by~\cite[1.3.16]{hovey:modelcats},
  to obtain a Quillen equivalence it suffices to show that for any
  projective-cofibrant simplicial presheaf $X$, the map $X\to
  S^B|X|_B$ is an $\sI_B$-localization.  This is proven
  in~\cite[\S7.1.4]{lurie:higher-topoi} using another, more
  complicated, partition-of-unity argument.
\end{proof}

\begin{rmk}
  The preceeding proof breaks down if we localize $\calS^{\sB\op}$ at
  all hypercovers instead: the relative realization of a hypercover is
  not necessarily an $ij$-equivalence.
\end{rmk}

We now show that the base change functors in the two cases also agree.
Suppose that $f\maps A\to B$ is a continuous map, where $A$ and $B$
are Hausdorff and hereditarily paracompact.  Then, as observed
in~\cite[5.9]{ij:ex-spaces}, the adjunction
\begin{equation}
  f^*\maps \sK/B \toot \sK/A\spam f_*\label{eq:basechange-ij}
\end{equation}
is Quillen for the $ij$-structures, since $f^*$ preserves the
generating cofibrations and trivial cofibrations.  It thus gives rise
to a derived adjunction which we denote $\bfL_{ij} f^* \adj \bfR_{ij}
f_*$.

On the other hand, the functor $f^{-1}\maps \sB\to\sA$ induces, by
precomposition, a functor $f_*\maps \calS^{\sA\op} \to
\calS^{\sB\op}$, which has a left adjoint $f^*$ given by Kan
extension.

\begin{prp}
  The adjunction
  \begin{equation}
    f^*\maps \calS^{\sB\op}\toot \calS^{\sA\op}\spam f_*\label{eq:basechange-spshf}
  \end{equation}
  is Quillen for the homotopy sheaf model structures.
\end{prp}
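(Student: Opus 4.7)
The plan is to reduce the problem in two steps. First, observe that at the projective level, the adjunction $f^*\adj f_*$ is already Quillen: the right adjoint $f_*$ is just precomposition with $f^{-1}\maps\sB\to\sA$, so it acts objectwise by $(f_*G)(V) = G(f^{-1}(V))$, and hence preserves both objectwise fibrations and objectwise weak equivalences. Since the homotopy sheaf model structures are obtained by left Bousfield localization at $\sI_B$ and $\sI_A$ respectively, a standard criterion (see~\cite{hirschhorn:modelcats}) says that the adjunction descends to a Quillen adjunction between the localizations if and only if the total left derived functor $\bfL_{\mathit{proj}} f^*$ sends each map in $\sI_B$ to a weak equivalence in the localized model structure on $\calS^{\sA\op}$.

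So fix an open cover $U = \bigcup_{\alpha\in\calA} U_\alpha$ in $B$ and consider the corresponding map $y\calA\into yU$. Since $f^*$ is defined as the left Kan extension of $y\circ f^{-1}$, it preserves representables: $f^*(yU) = y(f^{-1}(U))$, which is already projective-cofibrant. To compute $\bfL_{\mathit{proj}}f^*(y\calA)$, apply $f^*$ to the explicit projective-cofibrant replacement $\widetilde{y\calA}\to y\calA$ from the proof of \autoref{hosheaf<->ijstr}. Because $f^*$ preserves all colimits (in particular disjoint unions and the realization of~(\ref{eq:proj-coft-sieve})), and because $f^{-1}$ commutes with finite intersections, one obtains a canonical isomorphism $f^*(\widetilde{y\calA}) \cong \widetilde{y(f^{-1}\calA)}$, where $f^{-1}\calA$ denotes the open cover $\{f^{-1}(U_\alpha)\}_{\alpha\in\calA}$ of $f^{-1}(U)$, indexed by the same totally ordered set~$\calA$.

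It remains to see that $\widetilde{y(f^{-1}\calA)} \to y(f^{-1}(U))$ is a weak equivalence in the localized model structure on $\calS^{\sA\op}$. Factor it as $\widetilde{y(f^{-1}\calA)} \to y(f^{-1}\calA) \into y(f^{-1}(U))$. The first arrow is a projective weak equivalence by the very same nerve-of-a-totally-ordered-poset argument that made $\widetilde{y\calA}\to y\calA$ one. The second arrow is, by construction, a member of $\sI_A$, hence an $\sI_A$-local weak equivalence. Composing the two yields the required local equivalence, and the proof is complete.

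The argument is essentially formal, and the only step requiring a little attention is the identification $f^*(\widetilde{y\calA})\cong\widetilde{y(f^{-1}\calA)}$: this rests on the fact that $f^*$, being a left adjoint between presheaf categories with objectwise simplicial enrichment, commutes with the levelwise simplicial tensoring used to form the realization.
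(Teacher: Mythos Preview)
Your proof is correct and follows essentially the same approach as the paper: both verify the projective Quillen adjunction via $f_*$, reduce to showing $\bfL_{\mathit{proj}}f^*$ sends $\sI_B$ to $\sI_A$-local equivalences, and do so by observing that $f^*$ carries the explicit cofibrant replacement $\widetilde{y\calA}$ to the corresponding replacement for the pulled-back cover. Your explicit factorization through $y(f^{-1}\calA)$ and remark about simplicial tensoring add a little detail the paper omits, but the argument is the same.
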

\begin{proof}
  Since $f_*$ preserves objectwise fibrations and weak equivalences,
  the adjunction is Quillen for the projective model structures.
  Thus, by definition of Bousfield localization, it suffices to show
  that $\bfL_{\mathit{proj}}f^*$ takes the maps in $\sI_B$ to
  $\sI_A$-local equivalences.  However, since $f^*$ takes the
  representable functor $yU$ to $y(f^{-1}(U))$, for any cover
  $U=\bigcup U_\alpha$ in $B$ it takes the
  diagram~(\ref{eq:proj-coft-sieve}) to the corresponding diagram for
  the cover $f^{-1}(U) = \bigcup f^{-1}(U_\alpha)$ in $A$.  Since it
  also preserves colimits, it takes the resulting cofibrant
  replacement for a map in $\sI_B$ to the corresponding replacement
  for the corresponding map in $\sI_A$, which is clearly an
  $\sI_A$-local equivalence.
\end{proof}

Thus, we also have a derived adjunction $\bfL_{sh} f^* \adj \bfR_{sh}
f_*$.

\begin{thm}\label{basechange-hosheaf<->ijstr}
  The derived adjunctions of $f^*\adj f_*$ for the $ij$-model
  structure and the homotopy sheaf model structure agree under the
  Quillen equivalence~\eqref{eq:rel-gr-sing}.  More precisely, we have
  isomorphisms
  \begin{align*}
    \bfR_{sh} f_* \circ \bfR S^A &\iso
    \bfR S^B \circ \bfR_{ij} f_*\\
    \intertext{and}
    \bfL |-|_B \circ \bfL_{sh} f^* &\iso
    \bfL_{ij} f^* \circ \bfL |-|_A.
  \end{align*}
\end{thm}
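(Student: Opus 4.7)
The plan is to reduce both displayed isomorphisms to a single point-set identity, exploiting the very permissive nature of the $ij$-model structure (every object is fibrant).

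First I would verify a point-set commutation $S^B \circ f_* \iso f_* \circ S^A$, where on the left $f_*$ denotes the parametrized pushforward $\sK/A \to \sK/B$ and on the right $f_*$ denotes precomposition with $f^{-1}$ on simplicial presheaves. For $Y \in \sK/A$ and open $U \subset B$, the adjunction $f^* \adj f_*$ at the space level gives a natural homeomorphism between sections of $f_* Y$ over $U$ and maps $U \times_B A \to Y$ of spaces over $A$; since $U \times_B A = f^{-1}(U)$, this identifies $\uG(U, f_* Y) \iso \uG(f^{-1}(U), Y)$. Applying $S$ objectwise and unwinding definitions gives $S^B(f_* Y)(U) \iso (f_* S^A Y)(U)$, naturally in $U$.

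Next, to promote this to a derived statement, I would use that every object of $\sK/A$ and $\sK/B$ is $ij$-fibrant, so $\bfR_{ij} f_* X$ and $\bfR S^A X$ can be computed without any fibrant replacement. Further, since $|-|_A \adj S^A$ is Quillen between the homotopy sheaf and $ij$-model structures, the right adjoint $S^A$ sends $ij$-fibrant objects to homotopy-sheaf fibrant ones. Hence $S^A X$ is already homotopy-sheaf fibrant, so $\bfR_{sh} f_* (S^A X) = f_*(S^A X)$ on the nose. Both sides of the first claimed isomorphism therefore reduce to $f_* S^A X$, and the point-set identity of the first step delivers the isomorphism.

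For the second displayed isomorphism I would invoke the mate correspondence: each of the four functors appearing sits in a (derived) adjunction, and a natural isomorphism between composite right adjoints transposes to a natural isomorphism between the composite left adjoints. As a sanity check, the underlying point-set statement $|-|_A \circ f^* \iso f^* \circ |-|_B$ is immediate from cocontinuity of all functors involved once one checks agreement on representables $yU$, where both sides produce $f^{-1}(U)$ as a space over $A$.

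I anticipate the main obstacle to be bookkeeping, namely tracking which $f^*$, $f_*$, and $|-|$ is meant (spaces vs.\ presheaves) and confirming that point-set identities genuinely lift to the derived world. The fact that every object is $ij$-fibrant removes essentially all the model-categorical friction on the right-adjoint side, so apart from this bookkeeping very little genuine homotopical work is needed.
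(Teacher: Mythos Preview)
Your proposal is correct and follows essentially the same route as the paper: establish the point-set identity $\uG(U,f_*Y)\iso\uG(f^{-1}U,Y)$ to get $S^B\circ f_*\iso f_*\circ S^A$, observe that this lifts to derived functors because composites of right Quillen functors derive to composites (which you spell out explicitly via ``everything is $ij$-fibrant and $S^A$ preserves fibrancy''), and then obtain the left-adjoint isomorphism by passing to mates. The paper compresses your middle step into the phrase ``deriving Quillen adjunctions is functorial,'' but the content is identical.
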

\begin{proof}
  Since deriving Quillen adjunctions is functorial, it suffices to
  check that the point-set level adjunctions agree.  But if
  $X\in\sK/A$ and $U\in\sB$, we have $\uG(U,f_*X) \iso \uG(f^{-1}U,
  X)$, from which we see that $f_* \circ S^A \iso S^B \circ f_*$ as
  desired.  The other isomorphism follows formally.
\end{proof}

\begin{rmk}
  Of course, the functor $f^*\maps \sK/B\to\sK/A$ also has a left
  adjoint $f_!$.  It is observed in~\cite[5.9]{ij:ex-spaces} that when
  $f$ is an embedding, the adjunction $f_!\adj f^*$ is also Quillen
  for the $ij$-structures.  On the other hand, in general the functor
  $f^*\maps \calS^{\sB\op} \to \calS^{\sA\op}$ will not have a left
  adjoint at all.
\end{rmk}

By standard model-category arguments, the $ij$-structure and the
homotopy sheaf structure give rise to model structures on the
corresponding pointed categories $\sK_B$ and $\calS^{\sB\op}_*$.  The
following fact implies that \autoref{hosheaf<->ijstr} descends to the
pointed case as well.

\begin{prp}[{\cite[1.3.5 and 1.3.17]{hovey:modelcats}}]\label{qeqv->pted}
  If $F\maps \sC\toot \sD\spam G$ is a Quillen adjunction, there is a
  corresponding Quillen adjunction $F_*\maps \sC_*\too\sD_*\spam G_*$
  between the corresponding pointed model categories.  If in addition
  $F\adj G$ is a Quillen equivalence and the terminal object of \sC\
  is cofibrant and preserved by $F$, then $F_*\adj G_*$ is also a
  Quillen equivalence.
\end{prp}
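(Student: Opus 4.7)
The plan is to construct the pointed adjunction explicitly at the point-set level and then reduce every model-theoretic assertion to the corresponding unpointed statement for $F\adj G$. Recall that $\sC_*$, the category of pointed objects, is the coslice $*_\sC/\sC$, and its model structure is created by the forgetful functor: a map in $\sC_*$ is a weak equivalence, fibration, or cofibration if and only if its underlying map is so in $\sC$. Since $G$ preserves limits as a right adjoint, $G(*_\sD)\iso *_\sC$, so $G_*$ can simply be defined by $G_*(Y,y) = (G(Y),G(y))$. Since $F$ need not preserve terminal objects, $F_*$ must be defined as the pushout
\[F_*(X,x) \;=\; F(X)\sqcup_{F(*_\sC)} *_\sD,\]
pointed by the canonical map from $*_\sD$. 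A direct universal-property calculation using $F\adj G$ and $G(*_\sD)\iso*_\sC$ then yields the adjunction $F_*\adj G_*$.

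To verify the Quillen-adjunction part, I would take a (trivial) cofibration $f\maps (X,x)\to(Y,y)$ in $\sC_*$; its underlying map $X\to Y$ is a (trivial) cofibration in $\sC$, so $F(X)\to F(Y)$ is one in $\sD$ because $F$ is left Quillen. But $F_*(f)$ is the pushout of $F(X)\to F(Y)$ along $F(X)\to F_*(X,x)$, and pushouts of (trivial) cofibrations are again (trivial) cofibrations in $\sD$. Since cofibrations and weak equivalences in $\sD_*$ are detected on underlying objects, this shows that $F_*$ is left Quillen.

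For the Quillen-equivalence statement, the extra hypotheses---$*_\sC$ cofibrant and $F(*_\sC)\iso *_\sD$---cause the pushout defining $F_*$ to collapse, giving $F_*(X,x)\iso (F(X),F(x))$. A cofibrant object $(A,a)\in\sC_*$ has $a\maps *_\sC\into A$ a cofibration in $\sC$ with cofibrant source, so $A$ is cofibrant in $\sC$; similarly fibrancy of $(B,b)\in\sD_*$ coincides with fibrancy of $B$ in $\sD$. A map $F(A)\to B$ in $\sD$ and its transpose $A\to G(B)$ in $\sC$ then witness a map $F_*(A,a)\to(B,b)$ and its transpose $(A,a)\to G_*(B,b)$; by the Quillen equivalence $F\adj G$, $F(A)\to B$ is a weak equivalence iff $A\to G(B)$ is, and since weak equivalences in $\sC_*$ and $\sD_*$ are detected on underlying objects, this gives the Quillen-equivalence criterion for $F_*\adj G_*$. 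The one real subtlety, on which the whole argument hinges, is the pushout description of $F_*$: it is what makes $F_*$ a genuine left adjoint without assuming $F(*_\sC)\iso *_\sD$, and its behavior under cofibrations is cleanest to see by recognizing $F_*(f)$ itself as a pushout of $F(f)$.
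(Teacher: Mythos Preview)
Your argument is correct and is essentially the standard one: construct $F_*$ as the pushout $F(-)\sqcup_{F(*_\sC)}*_\sD$, verify it is left Quillen by the pushout-pasting lemma (so that $F_*(f)$ is a cobase change of $F(f)$), and then observe that under the extra hypotheses the pushout degenerates and both functors act on underlying objects, reducing the Quillen-equivalence criterion to the unpointed one. The paper gives no proof of its own here; the proposition is simply quoted from Hovey's book (Propositions~1.3.5 and~1.3.17), where exactly this argument is carried out.
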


\begin{cor}
  The sectioned adjunction
  \begin{equation}
    |-|_B\maps \calS^{\sB\op}_* \toot \sK_B\spam S^B.\label{eq:rel-gr-sing-sec}
  \end{equation}
  defines a Quillen equivalence between the model category of pointed
  homotopy sheaves and the sectioned $ij$-model structure.  For a map
  $f\maps A\to B$, the sectioned adjunctions $f^*\adj f_*$ are again
  Quillen in both cases and their derived functors agree under the
  equivalence~(\ref{eq:rel-gr-sing-sec}).
\end{cor}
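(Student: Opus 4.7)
The plan is to invoke \autoref{qeqv->pted} twice: once to promote \autoref{hosheaf<->ijstr} to the pointed Quillen equivalence, and once (for each $f$) to promote the base change adjunctions. Then the agreement of derived functors in the pointed case will fall out from the unpointed agreement of \autoref{basechange-hosheaf<->ijstr}, because the construction in \autoref{qeqv->pted} is natural with respect to Quillen adjunctions.

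The only real thing to verify is the hypothesis of \autoref{qeqv->pted}: the terminal object is cofibrant and is preserved by the relevant left adjoint. For the realization adjunction, the terminal object of $\calS^{\sB\op}$ is the presheaf constant at $\Delta^0$; since every $V\in\sB$ is a subset of $B$, this is precisely the representable $yB$. Being representable, it is projective-cofibrant, and hence cofibrant in the Bousfield localization at $\sI_B$. Moreover, $|yB|_B = B$ with its identity map over itself, which is the terminal object of $\sK/B$, so $|-|_B$ preserves the terminal. Thus \autoref{qeqv->pted} applies and yields the pointed Quillen equivalence~(\ref{eq:rel-gr-sing-sec}).

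For a map $f\maps A\to B$, the left adjoint $f^*\maps \sK/B\to\sK/A$ pulls back the identity $B\to B$ to the identity $A\to A$, i.e.\ preserves the terminal object; and the terminal object of $\sK/B$ is trivially $ij$-cofibrant (it is in the image of a generating cofibration's codomain, and more directly every representable $U\times D^0$ is $ij$-cofibrant). Similarly, the left Kan extension $f^*\maps \calS^{\sB\op}\to\calS^{\sA\op}$ sends $yU$ to $y(f^{-1}U)$; applied to the terminal $yB$ this gives $yA$, again the terminal. So \autoref{qeqv->pted} again applies and gives Quillen adjunctions $f^*_*\adj (f_*)_*$ in both the pointed $ij$-model structure on $\sK_B$ and the pointed homotopy sheaf model structure on $\calS^{\sB\op}_*$.

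Finally, to see that the pointed derived base change functors agree under~\eqref{eq:rel-gr-sing-sec}, note that at the point-set level the pointed version of each of $f^*$, $f_*$, $|-|_B$, and $S^B$ is obtained from the unpointed version simply by remembering the induced section. In particular, the point-set commutations $f_* \circ S^A \iso S^B \circ f_*$ used in the proof of \autoref{basechange-hosheaf<->ijstr} are already compatible with sections, and deriving them on the pointed model categories produces the desired natural isomorphisms by the same functoriality argument. No additional obstacle arises; the main thing to be careful about is just confirming that the terminal-object hypothesis of \autoref{qeqv->pted} holds, which we have done above.
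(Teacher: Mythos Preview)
Your proposal is correct and follows exactly the approach the paper intends: the corollary is stated immediately after \autoref{qeqv->pted} with no explicit proof, so the paper is simply invoking that proposition, and you have correctly supplied the terminal-object verifications (terminal presheaf $= yB$ is projective-cofibrant and realizes to the identity $B\to B$) that the paper leaves implicit. One small remark: for the base change adjunctions $f^*\adj f_*$ you only need the first sentence of \autoref{qeqv->pted}, which requires no hypothesis on the terminal object, so your checks there are harmless but unnecessary; the agreement of derived functors then follows, as you say, because the point-set identity $f_*\circ S^A \iso S^B\circ f_*$ from \autoref{basechange-hosheaf<->ijstr} is already section-compatible.
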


\begin{rmk}
  The adjunction~(\ref{eq:rel-gr-sing}) actually factors through the
  category $\sK^{\sB\op}$ of topological homotopy sheaves:
  \[\calS^{\sB\op} \toot \sK^{\sB\op} \toot \sK/B,\]
  where the first adjunction is the Quillen equivalence from
  \autoref{top-hosh}.  By the 2-out-of-3 property for Quillen
  equivalences, it follows that the adjunction
  \[\sK^{\sB\op} \toot \sK/B\]
  is a Quillen equivalence between the topological homotopy sheaf
  model structure and the $ij$-model structure.  Analogous remarks
  apply to the base change functors and the pointed variants.
\end{rmk}

\section{Parametrized spaces embed in homotopy sheaves}
\label{sec:comparison}

We now want to show that the homotopy theory of parametrized spaces
embeds in that of homotopy sheaves.  First we introduce some
terminology.

\begin{defn}
  We say that a Quillen adjunction $F\maps \sC\toot \sD\spam G$ is a
  \textbf{right Quillen embedding} from \sD\ to \sC\ if, for any
  fibrant $Y\in \sD$, the canonical map
  \begin{equation}
    FQGY \too FGY \too Y\label{eq:counit-pointset}
  \end{equation}
  is a weak equivalence, where $Q$ denotes cofibrant
  replacement in \sC.
\end{defn}

We regard a right Quillen embedding as exhibiting the homotopy theory
of \sD\ as a `sub-homotopy-theory' of the homotopy theory of \sC.  Of
course, there is a dual notion of left Quillen embedding.  For
example, the identity functor of \sK\ is a left Quillen embedding from
the $q$- or $m$-model structure to the $h$-model structure.

It is well-known that a Quillen adjunction is a Quillen equivalence
just when it induces an equivalence of homotopy categories.  There is
an analogue for Quillen embeddings.

\begin{prp}
  A Quillen adjunction $F\adj G$ is a right Quillen embedding if and
  only if the right derived functor $\bfR G$ is full and faithful on
  homotopy categories.
\end{prp}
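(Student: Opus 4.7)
The plan is to leverage the elementary categorical fact that, for any adjunction $L \adj R$ of ordinary categories, $R$ is full and faithful if and only if the counit $LR \to \Id$ is a natural isomorphism. Applying this to the derived adjunction $\bfL F \adj \bfR G$ between $\Ho\sC$ and $\Ho\sD$, the proposition reduces to showing that the derived counit $\bfL F \bfR G \to \Id_{\Ho\sD}$ is a natural isomorphism precisely when the map~\eqref{eq:counit-pointset} is a weak equivalence in $\sD$ for every fibrant $Y$.

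The key step is to identify the derived counit at a fibrant object. For fibrant $Y\in\sD$, one may take $Y$ itself as its fibrant replacement, so $\bfR G[Y]=[GY]\in\Ho\sC$; applying $\bfL F$ then produces $[F(QGY)]$, where $q\maps QGY \to GY$ is a cofibrant replacement in $\sC$. Unwinding the construction of the derived counit from the point-set counit $\epsilon\maps FG \to \Id_{\sD}$ of the original adjunction, one checks that at such $Y$ the derived counit is represented by the composite
\[
FQGY \too[Fq] FGY \too[\epsilon_Y] Y
\]
appearing in~\eqref{eq:counit-pointset}. Hence this derived counit is an isomorphism in $\Ho\sD$ if and only if this composite is a weak equivalence in $\sD$.

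To conclude, I would observe that since every object of $\Ho\sD$ is isomorphic (via fibrant replacement) to the image of a fibrant object of $\sD$, a natural transformation of functors on $\Ho\sD$ is a natural isomorphism if and only if it is an isomorphism at every fibrant object. Combining this observation with the identification of the derived counit above and the elementary adjunction fact recalled at the outset yields the claimed equivalence. The only genuinely non-formal ingredient is the identification of the derived counit at fibrant objects with the composite~\eqref{eq:counit-pointset}; I expect this to be the main piece of actual bookkeeping, since it involves tracking representatives through the construction of $\bfL F$ and $\bfR G$, but it is not a conceptual obstacle.
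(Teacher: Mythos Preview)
Your proposal is correct and follows essentially the same approach as the paper's proof: both use the standard fact that a right adjoint is full and faithful precisely when the counit is an isomorphism, and then identify the derived counit $\bfL F\circ\bfR G\to\Id_{\Ho\sD}$ at a fibrant $Y$ with the point-set composite~\eqref{eq:counit-pointset}. The paper's proof is simply a terse one-sentence version of your argument, omitting the explicit remark that checking at fibrant objects suffices.
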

\begin{proof}
  $\bfR G$ is full and faithful just when the counit
  \begin{equation}
    \bfL F \circ \bfR G \too \Id_{\Ho\sD}\label{eq:counit-htpy}
  \end{equation}
  is an isomorphism, but~\eqref{eq:counit-htpy} is represented on the
  point-set level by~\eqref{eq:counit-pointset}, so the former is an
  isomorphism just when the latter is a weak equivalence.
\end{proof}

\begin{rmk}
  By~\cite[1.3.16]{hovey:modelcats}, a right Quillen embedding is a
  Quillen equivalence if and only if the left adjoint $F$ reflects
  weak equivalences between cofibrant objects.  This is a homotopical
  version of the fact that a full and faithful right adjoint is an
  equivalence if and only if its left adjoint reflects isomorphisms.
\end{rmk}

We are now working towards showing that the identity adjunction of
$\sK/B$ is a right Quillen embedding from the $m$-model structure
(that is, the $m_{q,h}$-model structure) to the $ij$-model structure.
We begin with a sequence of lemmas.

\begin{lemma}\label{lem:mstr-ijstr-quillen}
  If $B$ is hereditarily $m$-cofibrant, then the identity functor of
  $\sK/B$ is a left Quillen functor from the $ij$-model structure to
  the $m$-model structure.
\end{lemma}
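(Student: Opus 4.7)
The plan is to verify the left Quillen conditions on generators. The $ij$-model structure on $\sK/B$ is cofibrantly generated by the maps $U\times S^{n-1}\into U\times D^n$ and $U\times D^{n-1}\into U\times D^n$ (ranging over open $U\subset B$ and $n\in\bbN$), and the $m$-cofibrations and trivial $m$-cofibrations in $\sK/B$ are closed under retracts, pushouts, and transfinite compositions. So it suffices to show that the first set consists of $m$-cofibrations and the second of trivial $m$-cofibrations.

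The $m$-model structure on $\sK/B$ is inherited from the one on $\sK$ by the standard slice construction, so a map in $\sK/B$ is an $m$-(trivial) cofibration precisely when its underlying map in $\sK$ is. I may therefore forget the projection to $B$ and work in $\sK$. The key observation is that each generator is the pushout-product in $\sK$ of the map $\emptyset\to U$ with either $S^{n-1}\into D^n$ or $D^{n-1}\into D^n$.

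By the hereditary hypothesis, every open $U\subset B$ has the homotopy type of a CW complex, hence is $m$-cofibrant, so $\emptyset\to U$ is an $m$-cofibration in $\sK$. The boundary inclusion $S^{n-1}\into D^n$ is a $q$-cofibration, hence an $m$-cofibration; while $D^{n-1}\into D^n$ is a $q$-cofibration between contractible CW complexes, hence a $q$-trivial $m$-cofibration. Since \autoref{thm:mixed} asserts that the $m$-structure on $\sK$ is monoidal, the pushout-product axiom applied to these two pairs of inputs delivers both conclusions simultaneously.

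There is no serious obstacle; the only points needing care are that the slice $m$-structure detects (trivial) cofibrations at the level of underlying spaces in $\sK$, and that the monoidality provided by \autoref{thm:mixed} is precisely the pushout-product axiom in the form used above. Both are routine once the hereditary hypothesis delivers $m$-cofibrancy of the open subsets $U$.
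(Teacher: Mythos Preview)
Your proof is correct and follows essentially the same route as the paper's: check the Quillen conditions on the generating $ij$-cofibrations and trivial cofibrations, use hereditary $m$-cofibrancy to know each open $U$ is $m$-cofibrant, and invoke the monoidality of the $m$-model structure. The only cosmetic difference is that the paper handles the generating trivial cofibrations by observing they are $f$-equivalences (hence $q$-equivalences) rather than by a second application of the pushout-product axiom, but the substance is the same.
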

\begin{proof}
  We must show that the generating cofibrations and trivial
  cofibrations for the $ij$-structure are $m$-cofibrations and
  $m$-trivial $m$-cofibrations.  However, the generating trivial
  cofibrations for the $ij$-structure are $f$-equivalences, and
  therefore $q$-equivalences, so it suffices to show that the
  generating $ij$-cofibrations are $m$-cofibrations.  Since the
  generating cofibrations have the form $U\times S^{n-1} \into U\times
  D^n$ for some open set $U\subset B$, and $U$ is $m$-cofibrant by
  assumption, this follows from the fact that the $m$-model structure
  is monoidal.
\end{proof}

In fact, as pointed out by the referee, the identity functor is also
left Quillen from the mixed $m_{ij,f}$-model structure to the
$m$-model structure.  This is because the $m$-fibrations are the
$h$-fibrations, which are also $f$-fibrations, and by
\autoref{lem:mstr-ijstr-quillen} all $m$-trivial $m$-fibrations are
$ij$-equivalences.  Thus we have the following diagram of identity
functors which are all left Quillen functors.  Both horizontal arrows
on the left are Quillen equivalences, and both horizontal arrows on
the right are left Quillen embeddings.
\[\xymatrix{(\sK/B,ij) \ar[r] &
  (\sK/B, m_{ij,f}) \ar[r] \ar[d] &
  (\sK/B, f) \ar[d] \\
  (\sK/B, q)\ar[r] &
  (\sK/B, m_{q,h}) \ar[r] &
  (\sK/B, h)}\]

\begin{lemma}\label{lem:prod-ij-coft}
  Let $B$ be hereditarily $m$-cofibrant and let $QF\to F$ be a
  $q$-cofibrant replacement of $F\in\sK$.  Then
  \begin{equation}
    B\times QF \too B\times F\label{eq:product-ij-repl}
  \end{equation}
  is an $ij$-cofibrant replacement of the product projection $B\times
  F\to B$.
\end{lemma}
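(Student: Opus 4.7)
The plan is to verify two things separately: (a) that $B\times QF \to B$ is $ij$-cofibrant (as an object of $\sK/B$), and (b) that the induced map $B\times QF \to B\times F$ over $B$ is an $ij$-equivalence.

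For (a), I would observe that the functor $r^\ast = B\times(-)\maps \sK\to \sK/B$ (where $r\maps B\to\ast$) is a left adjoint to the global sections functor $r_\ast = \uG(B,-)$. Specializing the presentation of the generating $ij$-(trivial) cofibrations to $U=B$, we see that $r^\ast$ sends the generating $q$-cofibrations $S^{n-1}\into D^n$ and generating $q$-trivial cofibrations $D^{n-1}\into D^n$ of \sK\ exactly to generating $ij$-cofibrations $B\times S^{n-1}\into B\times D^n$ and generating $ij$-trivial cofibrations $B\times D^{n-1}\into B\times D^n$ respectively. Hence $r^\ast$ is left Quillen from the $q$-model structure on \sK\ to the $ij$-model structure on $\sK/B$, and therefore takes the $q$-cofibrant object $QF$ to the $ij$-cofibrant object $B\times QF$.

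For (b), I would use the canonical homeomorphism $\uG(U, B\times F) \iso \Map(U,F)$ (a section of the trivial bundle $B\times F\to B$ over $U$ is, by the universal property of the product, the same data as a map $U\to F$), and likewise with $QF$ in place of $F$. It therefore suffices to show that $\Map(U,QF)\to\Map(U,F)$ is a $q$-equivalence for every open $U\subset B$. Since $B$ is hereditarily $m$-cofibrant, each such $U$ is $m$-cofibrant; and because the $m$-model structure on \sK\ is monoidal (\autoref{thm:mixed}), the functor $\Map(U,-)$ is right Quillen for the $m$-model structure. Now every space is $m$-fibrant, since every map to the point is automatically a Hurewicz fibration, so by Ken Brown's lemma $\Map(U,-)$ preserves all $q$-equivalences. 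Applied to $QF\we F$, this gives the desired $q$-equivalence $\Map(U,QF)\we\Map(U,F)$.

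The main conceptual point --- and the only place the hypothesis on $B$ is used --- is the appeal to hereditary $m$-cofibrancy in step (b), which via \autoref{locfincw->openhtcw} is the reason we insisted that $B$ be, e.g., a locally compact CW complex. Apart from that, the argument is essentially formal: part (a) is a direct comparison of generating sets, and part (b) is a standard monoidal right Quillen argument combined with the trivialization of sections of a product bundle. No difficult point-set topology is required beyond what is packaged in the $m$-model structure.
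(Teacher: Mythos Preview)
Your proof is correct and follows essentially the same approach as the paper: the paper proves (a) by noting that a cell decomposition of $QF$ yields an $ij$-cell decomposition of $B\times QF$, and proves (b) via the same homeomorphism $\uG(U,B\times F)\iso\Map(U,F)$ together with the classical fact that mapping out of a space of CW homotopy type preserves weak equivalences. Your left-Quillen argument for (a) and monoidal/Ken Brown argument for (b) are simply the model-categorical packaging of these same observations.
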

\begin{proof}
  It is clear that $B\times QF$ is $ij$-cofibrant, since any
  decomposition of $QF$ into cells $S^{n-1}\into D^n$ gives a
  corresponding decomposition of $B\times QF$ into cells $B\times
  S^{n-1}\to B\times D^n$, which are generating $ij$-cofibrations.  It
  remains to show that~(\ref{eq:product-ij-repl}) is an
  $ij$-equivalence.  For any product projection $B\times C\to B$, we
  have a homeomorphism
  \[\uG(U,B\times C) \iso \Map(U,C),\]
  so applying $\uG(U,-)$ to~(\ref{eq:product-ij-repl}) yields the map
  \begin{equation}
    \Map(U,QF)\to \Map(U,F).\label{eq:sections-of-repl}
  \end{equation}
  Since $QF\to F$ is a $q$-equivalence and $U$ has the homotopy
  type of a CW complex,~(\ref{eq:sections-of-repl}) is also a
  $q$-equivalence.  This is true for all open $U \subset B$, so the
  map~(\ref{eq:product-ij-repl}) is an $ij$-equivalence, as desired,
  and thus an $ij$-cofibrant replacement of $B\times F$.
\end{proof}

The following lemma is stronger than what we need in this section, but
we will use it again in \S\ref{sec:base-change}.

\begin{lemma}\label{lem:ijrepl-hfibt-qeqv}
  Let $f\maps A\to B$ be a map between hereditarily $m$-cofibrant
  spaces, where $B$ is contractible.  Let $X$ be an $h$-fibrant object
  of $\sK/B$ and let $QX\to X$ be an $ij$-cofibrant replacement.  Then
  its pullback $f^*QX\to f^*X$ is both a $q$-equivalence and an
  $ij$-equivalence.
\end{lemma}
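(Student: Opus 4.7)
The plan is to reduce to the product case by trivializing $X$ over the contractible base. Since $X\to B$ is a Hurewicz fibration and $B$ is contractible, a classical theorem of Dold produces an $f$-equivalence $\phi\maps B\times F\to X$ over $B$, where $F$ is any fiber of $X$. Fix also a $q$-cofibrant replacement $g\maps QF\to F$ in $\sK$. By \autoref{lem:prod-ij-coft}, the map $\Id_B\times g\maps B\times QF\to B\times F$ is an $ij$-cofibrant replacement, so the composite $B\times QF\too[\Id\times g] B\times F\too[\phi] X$ is an $ij$-equivalence out of an $ij$-cofibrant object. Taking $QX\to X$ to be the standard $ij$-cofibrant replacement obtained by factoring $\emptyset\to X$ as an $ij$-cofibration followed by an $ij$-trivial $ij$-fibration (the specific choice is immaterial, since $f^*$ preserves $ij$-equivalences between $ij$-cofibrant objects by Ken Brown's lemma), one can lift this composite along $QX\to X$ to obtain $\psi\maps B\times QF\to QX$ over $X$; two-out-of-three then forces $\psi$ to be an $ij$-equivalence, and since $B\times QF$ and $QX$ are both $ij$-cofibrant, the earlier lemma identifying $ij$- and $f$-equivalences between $ij$-cofibrant objects upgrades $\psi$ to an $f$-equivalence.

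Applying $f^*$ will finish the argument. Fiberwise homotopy equivalences being stable under pullback, $f^*\psi\maps A\times QF\to f^*QX$ and $f^*\phi\maps A\times F\to f^*X$ are $f$-equivalences in $\sK/A$, hence simultaneously $q$- and $ij$-equivalences. They fit into a commutative square
\[
\xymatrix{
A\times QF \ar[r]^{f^*\psi} \ar[d]_{\Id_A\times g} &
f^*QX \ar[d] \\
A\times F \ar[r]^{f^*\phi} &
f^*X
}
\]
whose left-hand vertical is $\Id_A\times g$. Since $A$ is also hereditarily $m$-cofibrant, \autoref{lem:prod-ij-coft} identifies this map as an $ij$-cofibrant replacement of $A\times F$, hence an $ij$-equivalence; and the natural splitting $\pi_n(A\times Y,(a,y))\iso\pi_n(A,a)\times\pi_n(Y,y)$ shows that it is a $q$-equivalence as well. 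One application of two-out-of-three for each class of weak equivalences then forces the right-hand vertical $f^*QX\to f^*X$ to be both a $q$-equivalence and an $ij$-equivalence, as required.

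The only step where I anticipate even minor friction is the appeal to Dold's trivialization in the point-set setting of $k$-spaces with compactly generated base. Since the contracting homotopy of $B$ lifts along $X\to B$ by the homotopy lifting property and Dold's construction of the fiber-homotopy inverse uses only such lifts together with well-behaved evaluation and composition, the classical argument should transfer without modification. Everything after that step is formal bookkeeping with the $ij$- and $f$-model structures already developed in the paper.
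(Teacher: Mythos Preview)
Your argument is correct and follows essentially the same route as the paper: both reduce to the product case via Dold's trivialization of a Hurewicz fibration over a contractible base together with \autoref{lem:prod-ij-coft}, and then pull back along $f$. The paper is slightly more economical in that it simply replaces $X$ by $B\times F$ and $QX$ by $B\times QF$ up to $f$-equivalence at the outset (noting that $f$-equivalences are both $q$- and $ij$-equivalences and are preserved by pullback), rather than constructing your explicit comparison map $\psi$; but the content is identical.
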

In particular, when $f$ is the identity of $B$, this says that $QX\to
X$ itself is also a $q$-equivalence.
\begin{proof}
  Since $f$-equivalences are both $q$-equivalences and
  $ij$-equivalences, and are preserved under pullback, we can work up
  to $f$-equivalence.  For example, since any two $ij$-cofibrant
  replacements for $X$ are $f$-equivalent, it suffices to show the
  result for \emph{some} $ij$-cofibrant replacement.  And since $B$ is
  contractible, any $h$-fibrant $X\to B$ is $f$-equivalent to a
  product projection, so we may as well assume that $X$ itself is a
  product projection $B\times F\to B$.  In this case, we can use as
  our $ij$-cofibrant replacement the map $B\times QF \to B\times F$
  from \autoref{lem:prod-ij-coft}, for some $q$-cofibrant replacement
  $QF\to F$.  But the pullback of this map along any $f\maps A\to B$
  is just $A\times QF\to A\times F$, which is both a $q$-equivalence
  and an $ij$-equivalence (the latter by \autoref{lem:prod-ij-coft}).
\end{proof}

\begin{thm}\label{mstr-into-ijstr}
  If $B$ is a locally compact CW complex, the identity adjunction of
  $\sK/B$ is a right Quillen embedding from the $m$-model structure to
  the $ij$-model structure.
\end{thm}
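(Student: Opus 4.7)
The plan is to combine \autoref{lem:mstr-ijstr-quillen} with \autoref{lem:ijrepl-hfibt-qeqv} via a local-to-global argument on a good open cover of $B$. By \autoref{lem:mstr-ijstr-quillen}, the identity is left Quillen from $(\sK/B,ij)$ to $(\sK/B,m)$, so in the opposite direction it is right Quillen as required. With $F=G=\Id$, the right-Quillen-embedding criterion unpacks as follows: for every Hurewicz fibration $Y\to B$ and every $ij$-cofibrant replacement $QY\to Y$, the map $QY\to Y$ (which is automatically an $ij$-equivalence) must also be a $q$-equivalence of total spaces.

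For the local step, I would choose a good open cover $\{U_\alpha\}$ of $B$ in which every finite intersection $U_\sigma=U_{\alpha_1}\cap\cdots\cap U_{\alpha_n}$ is contractible---such covers exist for CW complexes, for instance as the open stars of a sufficiently fine subdivision. Each $U_\sigma$ is open in $B$, so by \autoref{locfincw->openhtcw} and the observation that open subsets of $U_\sigma$ are also open in $B$, it is hereditarily $m$-cofibrant. Pullback along $U_\sigma\into B$ is left Quillen for the $ij$-structure (it preserves generating cofibrations), and $\uG_B(V,-)=\uG_{U_\sigma}(V,-)$ for $V\subset U_\sigma$ open, so $QY|_{U_\sigma}\to Y|_{U_\sigma}$ is an $ij$-cofibrant replacement in $\sK/U_\sigma$ of the $h$-fibration $Y|_{U_\sigma}$ over the contractible base $U_\sigma$. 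Applying \autoref{lem:ijrepl-hfibt-qeqv} with $f=\Id_{U_\sigma}$ then gives a $q$-equivalence $QY|_{U_\sigma}\to Y|_{U_\sigma}$ for every~$\sigma$.

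To pass from local to global, I would argue that both $Y$ and $QY$ are $q$-equivalent to the homotopy colimit, indexed by the simplex category of the nerve of the cover, of their restrictions: $Y\eqv\mathrm{hocolim}_\sigma Y|_{U_\sigma}$, and similarly for $QY$. The levelwise $q$-equivalences from the preceding paragraph then yield a $q$-equivalence $\mathrm{hocolim}_\sigma QY|_{U_\sigma}\to\mathrm{hocolim}_\sigma Y|_{U_\sigma}$, whence $QY\to Y$ itself is a $q$-equivalence. The main obstacle is precisely this descent step. For $Y$ it should follow from the weak equivalence $\mathrm{hocolim}_\sigma U_\sigma\eqv B$---a partition-of-unity argument in the spirit of the proof of \autoref{hosheaf<->ijstr}---together with the fact that pullback along the $h$-fibration $Y\to B$ preserves weak equivalences; for $QY$ the explicit cellular presentation provided by $ij$-cofibrancy yields the parallel description directly. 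If a clean global descent statement proves elusive, one can instead reduce to iterated two-piece Mayer--Vietoris gluings via a finite or skeletal exhaustion of $B$, using that open covers by two opens form excisive triads whose pushout squares are homotopy pushouts.
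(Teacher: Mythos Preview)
Your outline is correct, and the local step matches the paper's exactly; the difference lies in how the two arguments close the local-to-global gap.  The paper does not insist on a \emph{good} cover.  Instead it takes any cover $(U_\alpha)$ of $B$ by contractible opens and uses the full strength of \autoref{lem:ijrepl-hfibt-qeqv}: since that lemma asserts that the $ij$-cofibrant replacement map over a contractible base remains a $q$-equivalence after pullback along \emph{any} map of good ancestors, one gets $q$-equivalences over every open subset of each $U_\alpha$, and in particular over every finite intersection $U_{\alpha_1}\cap\cdots\cap U_{\alpha_n}$ whether or not it is contractible.  The global conclusion is then a one-line citation of May's local-to-global criterion \cite[1.4]{may:weq-qfib}: a map restricting to a $q$-equivalence over every member of an open cover closed under finite intersections is itself a $q$-equivalence.

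Your route trades these two ingredients against each other.  By demanding a good cover you only need the $f=\Id$ case of \autoref{lem:ijrepl-hfibt-qeqv} at each $U_\sigma$, which is slightly simpler; but you then have to supply the descent step yourself, and you correctly flag this as the main obstacle.  Your proposed hocolim argument would work---indeed it is essentially a proof of the May criterion---but it is precisely the part left unfinished.  The paper's approach buys a shorter proof by spending the extra generality already built into \autoref{lem:ijrepl-hfibt-qeqv} and outsourcing the gluing to a known reference; yours would be more self-contained if completed, at the cost of constructing a good cover and carrying out the \v{C}ech descent by hand.
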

\begin{proof}
  We have shown in \autoref{lem:mstr-ijstr-quillen} that the identity
  adjunction is Quillen, so it remains to show that if $X\to B$ is
  $h$-fibrant, then $QX\to X$ is a $q$-equivalence (here $Q$ denotes
  $ij$-cofibrant replacement).

  Since $B$ is a CW complex, it is locally contractible, so it has a
  cover $(U_\alpha)$ by contractible open sets with inclusions
  $j_\alpha\maps U_\alpha \into B$.  For any $\alpha$, the functor
  $j_\alpha^*\maps \sK/B\to\sK/U_\alpha$ preserves $ij$-equivalences
  and $ij$-cofibrations, so $j_\alpha^*QX \to j_\alpha^*X$ is again an
  $ij$-cofibrant replacement of a Hurewicz fibration.  But since
  $U_\alpha$ is contractible, \autoref{lem:ijrepl-hfibt-qeqv} tells us
  that $j_\alpha^*QX \to j_\alpha^*X$ is a $q$-equivalence preserved
  under pullbacks.  In particular, if $j\maps U\into U_\alpha$ is any
  open subset, the further restriction $j^*j_\alpha^*QX \to
  j^*j_\alpha^*X$ is also a $q$-equivalence.

  It follows that $QX\to X$ restricts to a $q$-equivalence over all
  open sets in the cover of $B$ consisting of all finite intersections
  of the sets $U_\alpha$.  Since this cover is closed under finite
  intersections by construction, it follows
  from~\cite[1.4]{may:weq-qfib} that $QX\to X$ is also a
  $q$-equivalence, as desired.
\end{proof}

Since the identity is a Quillen equivalence between the $ij$-model
structure and the $m_{ij,f}$-model structure, it follows that the
identity is also a right Quillen embedding from the $m$-model
structure to the $m_{ij,f}$-model structure.

\begin{cor}\label{mstr-into-hosheaves}
  If $B$ is a locally compact CW complex, then the relative
  realization-singular complex adjunction
  \[|-|_B\maps \calS^{\sB\op} \toot \sK/B\spam S^B
  \]
  is a right Quillen embedding from the $m$-model structure to the
  homotopy sheaf model structure.
\end{cor}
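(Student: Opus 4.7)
The plan is to obtain the adjunction $|-|_B \adj S^B$, viewed between $\calS^{\sB\op}$ with the homotopy sheaf model structure and $\sK/B$ with the $m$-model structure, as the composite of two previously established Quillen adjunctions: the Quillen equivalence of \autoref{hosheaf<->ijstr} together with the right Quillen embedding of \autoref{mstr-into-ijstr}. Composing Quillen adjunctions gives a Quillen adjunction whose derived functors are the composites of the derived functors of the factors, and composing fully faithful functors gives a fully faithful functor, so by the proposition characterizing right Quillen embeddings via full faithfulness of the derived right adjoint, the composite should itself be a right Quillen embedding.

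First I would verify the hypotheses of the two input theorems. A locally compact CW complex is metrizable by the equivalences recalled just before \autoref{locfincw->openhtcw}, and every metrizable space is Hausdorff and hereditarily paracompact, so \autoref{hosheaf<->ijstr} applies to give that
\[|-|_B\maps \calS^{\sB\op} \toot (\sK/B, ij)\spam S^B\]
is a Quillen equivalence. Simultaneously, \autoref{locfincw->openhtcw} tells us that every open subset of $B$ is of the homotopy type of a CW complex, i.e.\ is $m$-cofibrant, so $B$ is hereditarily $m$-cofibrant. Hence \autoref{mstr-into-ijstr} applies and the identity adjunction $(\sK/B, m) \toot (\sK/B, ij)$ is a right Quillen embedding.

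Finally, I would assemble the composite. The composed Quillen adjunction between $\calS^{\sB\op}$ (with the homotopy sheaf structure) and $\sK/B$ (with the $m$-structure) is again $|-|_B \adj S^B$, and its right derived functor is isomorphic to $\bfR S^B \circ \bfR \Id$, where the first factor is an equivalence of homotopy categories and the second is fully faithful. The composition of a fully faithful functor with an equivalence is again fully faithful, which by the characterization of right Quillen embeddings via the derived counit is exactly the condition we need. There is really no obstacle here: once the two underlying theorems are in hand, the corollary is purely formal, amounting to the observation that right Quillen embeddings are closed under composition and that a Quillen equivalence is in particular a right Quillen embedding.
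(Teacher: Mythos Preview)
Your proposal is correct and follows exactly the paper's approach: compose the Quillen equivalence of \autoref{hosheaf<->ijstr} with the right Quillen embedding of \autoref{mstr-into-ijstr}. The only cosmetic differences are that the paper invokes the general fact that all CW complexes are Hausdorff and hereditarily paracompact rather than routing through metrizability, and that your separate verification of hereditary $m$-cofibrancy is unnecessary since the hypothesis of \autoref{mstr-into-ijstr} is already ``locally compact CW complex''.
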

\begin{proof}
  All CW complexes are Hausdorff and hereditarily paracompact, so we
  can compose the right Quillen embedding from
  \autoref{mstr-into-ijstr} with the Quillen equivalence from
  \autoref{hosheaf<->ijstr}.
\end{proof}

This result shows that parametrized spaces do, in fact, embed
`homotopically fully and faithfully' into homotopy sheaves.  In
particular, at the level of homotopy categories we have an adjunction
\[\xymatrix{\iota^\star \maps \HoSh(B) \eqv \Ho_{ij}(\sK/B)\;
  \ar@<1mm>[r] &
  \;\Ho_q(\sK/B) \spam \iota_\star
  \ar@<1mm>[l]}\]
in which the right adjoint is full and faithful.  The existence of
$\iota_\star$, though not its full-and-faithfulness, was observed
in~\cite{ij:ex-spaces}.

\begin{rmk}
  Unlike \autoref{hosheaf<->ijstr}, \autoref{mstr-into-hosheaves}
  remains true if we localize $\calS^{\sB\op}$ at all
  \emph{hypercovers}, because the realization of any hypercover
  \emph{is} a $q$-equivalence, though not an $ij$-equivalence---this
  follows from the proof of~\cite[Thm.\ 12.1]{am:etale-homotopy}.  In
  other words, \emph{all locally constant homotopy sheaves are
    hypercomplete}.  Thus,~\cite{toen:locconst} was able to prove a
  simplicial version of \autoref{mstr-into-hosheaves} using a
  localization at all hypercovers.
\end{rmk}

The only property of a locally compact CW complex used in
\autoref{mstr-into-ijstr}, aside from hereditary $m$-cofibrancy, is
that it is locally contractible.  For \autoref{mstr-into-hosheaves},
we also need it to be Hausdorff and hereditarily paracompact.  Thus,
we can abstract the necessary properties of $B$ as follows.

\begin{defn}
  We say that a space is a \textbf{good ancestor} if it is
  \begin{enumerate}
  \item compactly generated,
  \item Hausdorff and hereditarily paracompact,
  \item hereditarily $m$-cofibrant, and
  \item locally contractible.
  \end{enumerate}
\end{defn}

Any locally compact CW complex is a good ancestor.  Moreover, any open
subspace of a good ancestor is a good ancestor; that is, the property
of being a good ancestor is itself hereditary.  This will be important
in \S\ref{sec:base-change}.

Finally, most of the results of this section have corresponding
versions for the sectioned theory, by the following lemma.

\begin{lemma}\label{lem-pointed-qemb}
  If $F\maps \sC\toot \sD\spam G$ is a right Quillen embedding and the
  terminal object of \sC\ is cofibrant and preserved by $F$, then
  $F_*\maps \sC_*\toot \sD_*\spam G_*$ is also a right Quillen
  embedding.
\end{lemma}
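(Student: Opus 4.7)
The plan is to check the defining condition of a right Quillen embedding directly for $F_* \adj G_*$, reducing it to the corresponding condition for $F \adj G$ by observing that $F_*$, $G_*$, and pointed cofibrant replacement all coincide with their unpointed counterparts on underlying objects.

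First I would record the point-set description of the pointed adjunction. Every right adjoint preserves terminal objects, so $G_* Y$ is just $GY$ equipped with the basepoint $G$ applied to that of $Y$; in particular $Y$ is fibrant in $\sD_*$ iff it is fibrant in $\sD$, and weak equivalences in $\sD_*$ are detected in $\sD$. Dually, the hypothesis that $F$ preserves the terminal object makes the pushout defining $F_* X$ degenerate, giving $F_* X = FX$ with the induced basepoint. Next, a cofibrant replacement $Q_* Z \to Z$ in $\sC_*$ is by definition a factorization of the basepoint $*_\sC \to Z$ as a cofibration $*_\sC \hookrightarrow Q_* Z$ followed by a trivial fibration $Q_* Z \to Z$ in $\sC$. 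Since $*_\sC$ is cofibrant in $\sC$ by hypothesis and cofibrations compose, $Q_* Z$ is cofibrant in $\sC$, so $Q_* Z \to Z$ is simultaneously a cofibrant replacement of $Z$ in $\sC$.

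Now let $Y \in \sD_*$ be fibrant. Combining the observations above, the pointed canonical map $F_* Q_* G_* Y \to Y$ coincides, on underlying objects, with the composite $F(Q_* GY) \to FGY \to Y$ in $\sD$, where $Q_* GY \we GY$ is a cofibrant replacement of $GY$ in $\sC$. The right Quillen embedding hypothesis on $F \adj G$ says this kind of composite is a weak equivalence for \emph{some} cofibrant replacement of $GY$, but that property is independent of the choice: any two cofibrant replacements are linked by a weak equivalence between cofibrant objects (obtained by lifting one trivial fibration through the other), and $F$ preserves such maps by Ken Brown's lemma. Hence $F_* Q_* G_* Y \to Y$ is a weak equivalence in $\sD$, and therefore in $\sD_*$. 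The only thing requiring care is the bookkeeping between pointed and unpointed structures; the twin hypotheses that $*_\sC$ is cofibrant and preserved by $F$ are exactly what make the two cofibrant-replacement procedures interchangeable, so there is no substantive new obstacle.
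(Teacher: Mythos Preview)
Your proof is correct and follows essentially the same route as the paper's: both observe that because $F$ preserves the terminal object, $F_*$ and $G_*$ act as $F$ and $G$ on underlying objects, and that because $*_\sC$ is cofibrant, a pointed cofibrant replacement $Q_*GY$ is also an unpointed cofibrant replacement of $GY$, whence the pointed counit map coincides with the unpointed one and is a weak equivalence. Your extra paragraph invoking Ken Brown's lemma to show independence of the choice of cofibrant replacement is a point the paper leaves implicit (it simply takes $Q_*GY$ as an instance of $QGY$), so your version is slightly more explicit but not materially different.
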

\begin{proof}
  Since the terminal object of \sC\ is cofibrant, any cofibrant object
  of $\sC_*$ is also cofibrant in \sC.  The fact that $F$ preserves
  the terminal object implies that the pointed adjunction $F_*\adj
  G_*$ is defined simply by applying $F$ and $G$ to underlying
  objects.  Thus, if $Y$ is fibrant in $\sD_*$, the map $F_* Q_* G_* Y
  \to Y$ is just $FQGY\to Y$, which is a weak equivalence since $Y$ is
  also fibrant in \sD.
\end{proof}

\begin{cor}
  If $B$ is a good ancestor, the identity functor of $\sK_B$ is a
  right Quillen embedding from the $m$-model structure to the
  $ij$-model structure, and the pointed adjunction
  \[|-|_B\maps \calS^{\sB\op}_* \toot \sK_B\spam S^B.
  \]
  is a right Quillen embedding from the $m$-model structure to the
  homotopy sheaf model structure.
\end{cor}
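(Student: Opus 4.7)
The plan is to deduce both halves of the corollary as immediate consequences of \autoref{lem-pointed-qemb} applied to the two right Quillen embeddings already established in the unpointed setting: the identity adjunction case of \autoref{mstr-into-ijstr} and the relative realization--singular complex adjunction of \autoref{mstr-into-hosheaves}. (Both of these apply for $B$ a good ancestor, since the proofs use only hereditary $m$-cofibrancy, Hausdorffness, hereditary paracompactness, and local contractibility.) In each case the work reduces to verifying the two hypotheses of \autoref{lem-pointed-qemb}: the terminal object of the source category $\sC$ is cofibrant in $\sC$, and the left adjoint $F$ preserves it.

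For the first part the adjunction is the identity $(\sK/B,ij) \toot (\sK/B,m)$, so $\sC = (\sK/B,ij)$ and the terminal object is $B$ with identity structure map $B\to B$. I would exhibit this as $ij$-cofibrant by a single generating cell attachment: taking $U = B$ as an open subset of itself and $n=0$ in the list of generating $ij$-cofibrations, the map $B\times S^{-1} = \emptyset \into B\times D^0 = B$ is a generating $ij$-cofibration whose pushout from $\emptyset$ is precisely the terminal object. Preservation under the identity functor $F$ is automatic.

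For the second part the left adjoint is $|-|_B \maps \calS^{\sB\op} \to \sK/B$, and the terminal object of $\calS^{\sB\op}$ is the representable presheaf $yB$, because every open $V\subset B$ satisfies $V\subset B$ and hence $yB(V) = \{*\}$. All representable simplicial presheaves are projective-cofibrant, and since Bousfield localization leaves the cofibrations unchanged, $yB$ is also cofibrant in the homotopy sheaf model structure. Finally, the relative realization sends any representable $yU$ to $U$ regarded as a space over $B$, so $|yB|_B = B$, the terminal object of $\sK/B$.

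I do not anticipate any real obstacle; both verifications are essentially immediate unpackings of definitions. The only subtlety worth flagging is remembering to include $U = B$ when ranging over open subsets in the indexing set for the generating $ij$-cofibrations, which is what makes the terminal object $B\to B$ automatically $ij$-cofibrant.
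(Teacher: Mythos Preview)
Your proposal is correct and follows essentially the same approach as the paper: both apply \autoref{lem-pointed-qemb} to the unpointed right Quillen embeddings, and both reduce to checking that the terminal objects are cofibrant and preserved by the left adjoints. Your proof simply makes explicit the verifications that the paper leaves as ``clearly'' and ``easy to see,'' in particular the identification of the terminal simplicial presheaf with $yB$ and the computation $|yB|_B = B$.
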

\begin{proof}
  The terminal object is cofibrant in all model structures under
  consideration, the identity functor clearly preserves it, and it is
  easy to see that so does the relative geometric realization.  Thus
  we can apply \autoref{lem-pointed-qemb}.
\end{proof}

Of course, since there is no known `$fp$-model structure', the
statements about the $m_{ij,f}$-model structure have no sectioned
analogue.

\section{The essential image}
\label{sec:essimg}

We would now like to identify the image of the right Quillen embedding
from \autoref{mstr-into-ijstr}.  As explained in
\S\ref{sec:introduction}, our intuition is that it consists of the
locally constant homotopy sheaves.  Of course, we need to make precise
what we mean by `locally constant' in a homotopical sense.  From now
on, we will take the $ij$-structure as our model for homotopy sheaves.

\begin{defn}
  We say that an object $X\to B$ of $\sK/B$ is \textbf{constant}
  if it is isomorphic in $\Ho_{ij}(\sK/B)$ to one of the form
  $B\times F \to B$.  We say that it is \textbf{locally constant} if
  $B$ admits an open cover $(U_\alpha)$, with inclusions
  $j_\alpha\maps U_\alpha\into B$, such that $j_\alpha^*X$ is constant
  for all $\alpha$.
\end{defn}

We have the following trivial observation.

\begin{lemma}
  If $B$ is locally contractible, then any Hurewicz fibration $X\to B$
  is locally constant.
\end{lemma}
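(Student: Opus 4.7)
The plan is to cover $B$ by contractible opens, and on each such open show that the restriction of $X$ is fiberwise homotopy equivalent to a trivial product, which in particular gives constancy in $\Ho_{ij}$.

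First I would use local contractibility to choose an open cover $(U_\alpha)$ of $B$ such that each $U_\alpha$ is contractible, with inclusions $j_\alpha\maps U_\alpha\into B$. Pullback preserves Hurewicz fibrations, so each $j_\alpha^* X \to U_\alpha$ is again an $h$-fibration. Let $F_\alpha$ denote its fiber over a chosen basepoint of $U_\alpha$.

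Next I would invoke the classical fact that an $h$-fibration over a contractible base is $f$-equivalent (i.e., fiberwise homotopy equivalent) to the trivial product projection with the same fiber: a contraction $U_\alpha\times I \to U_\alpha$ of $U_\alpha$ to a point lifts, by the homotopy lifting property, to a fiberwise deformation of $j_\alpha^* X$ onto $U_\alpha \times F_\alpha$, producing a fiberwise homotopy equivalence $j_\alpha^* X \simeq U_\alpha\times F_\alpha$ over $U_\alpha$.

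Finally, by the lemma recalled in \S\ref{sec:model-struct-homot} (that every $f$-equivalence is an $ij$-equivalence), this $f$-equivalence represents an isomorphism in $\Ho_{ij}(\sK/U_\alpha)$ between $j_\alpha^* X$ and $U_\alpha\times F_\alpha$. Thus each $j_\alpha^* X$ is constant, so $X$ is locally constant, as required. There is no serious obstacle here; the only ingredient that is not completely formal is the classical trivialization of $h$-fibrations over contractible bases, which is standard.
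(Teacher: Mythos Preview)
Your proof is correct and follows exactly the same approach as the paper: cover by contractible opens, observe that the restricted $h$-fibration is $f$-equivalent to a product projection, and conclude constancy in $\Ho_{ij}$. The paper's proof is just a one-line version of what you wrote, leaving the step from $f$-equivalence to isomorphism in $\Ho_{ij}$ implicit.
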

\begin{proof}
  Take a cover by contractible opens; then $j_\alpha^*X$ is a
  fibration over a contractible space, hence $f$-equivalent to a
  product projection.
\end{proof}

We observe that the essential image of the embedding $\iota_\star\maps
\Ho_q(\sK/B)\into \Ho_{ij}(\sK/B)$ consists of the objects of
$\Ho_{ij}(\sK/B)$ isomorphic to Hurewicz fibrations, since the latter
are the fibrant objects in the $m$-model structure.  Thus, this image
is contained in the locally constant objects.  We now intend to show
that conversely, any locally constant object of $\Ho_{ij}(\sK/B)$ is
isomorphic to a Hurewicz fibration.  We begin with the following lemma
which clarifies the structure of locally constant objects.

\begin{lemma}\label{lem:ijcoft-qfib}
  Let $B$ be a good ancestor and $X\to B$ be $ij$-cofibrant and
  locally constant.  Then $X$ is locally $f$-equivalent to a product
  projection.  In particular, it is a quasifibration.
\end{lemma}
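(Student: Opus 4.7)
The plan is to work locally on a cover witnessing local constancy, upgrading the abstract isomorphism in $\Ho_{ij}$ to an honest $f$-equivalence by exploiting $ij$-cofibrancy on both sides, and then deducing the quasifibration property from a local-to-global principle.

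First, by the definition of locally constant, pick an open cover $(U_\alpha)$ of $B$ such that each $j_\alpha^* X$ is isomorphic in $\Ho_{ij}(\sK/U_\alpha)$ to a product projection $U_\alpha \times F_\alpha \to U_\alpha$. Since the property of being a good ancestor is hereditary, each $U_\alpha$ is again a good ancestor. The adjunction $j_\alpha^* \adj j_{\alpha *}$ is Quillen for the $ij$-structures, so $j_\alpha^* X$ remains $ij$-cofibrant over $U_\alpha$. Choosing a $q$-cofibrant replacement $QF_\alpha \to F_\alpha$, \autoref{lem:prod-ij-coft} shows that $U_\alpha \times QF_\alpha \to U_\alpha$ is an $ij$-cofibrant replacement of the product projection, and hence is likewise isomorphic to $j_\alpha^* X$ in $\Ho_{ij}$.

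Now both $j_\alpha^* X$ and $U_\alpha \times QF_\alpha$ are $ij$-cofibrant, and every object of $\sK/U_\alpha$ is $ij$-fibrant. Between cofibrant-fibrant objects in any model category, an isomorphism in the homotopy category is represented by an actual weak equivalence, so we obtain an $ij$-equivalence $U_\alpha \times QF_\alpha \we j_\alpha^* X$. By the earlier lemma in \S\ref{sec:model-struct-homot} that $ij$-equivalences between $ij$-cofibrant objects coincide with $f$-equivalences, this map is in fact an $f$-equivalence, proving the first claim.

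For the quasifibration conclusion, any product projection is a Hurewicz fibration, hence a quasifibration, and $f$-equivalences (being fiberwise homotopy equivalences) send quasifibrations to quasifibrations; so each $X|_{U_\alpha} \to U_\alpha$ is a quasifibration. Since $B$ is paracompact, we may refine to a numerable cover closed under finite intersections, and local constancy plus $ij$-cofibrancy both restrict along open inclusions, so the same argument makes $X$ a quasifibration over each intersection. The Dold--Thom local-to-global theorem for quasifibrations then yields that $X \to B$ is a quasifibration. The main obstacle is the upgrade from an abstract $\Ho_{ij}$-isomorphism to a genuine $f$-equivalence; once the cofibrant/fibrant bookkeeping is in place, the quasifibration conclusion is a standard recognition argument.
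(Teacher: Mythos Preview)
Your argument is correct and follows essentially the same route as the paper's proof: upgrade the $\Ho_{ij}$-isomorphism to an honest $f$-equivalence via $ij$-cofibrancy on both sides (using \autoref{lem:prod-ij-coft} for the product side), then globalize the local quasifibration property by the Dold--Thom criterion on a cover closed under finite intersections. The only cosmetic differences are that the paper phrases the local quasifibration step via Dold's notion of \emph{halb-fibration} (something $f$-equivalent to an $h$-fibration) rather than your direct ``$f$-equivalences preserve quasifibrations,'' and that your mention of numerability is unnecessary here since the Dold--Thom criterion requires only closure under finite intersections.
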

\begin{proof}
  Since $X$ is locally constant, we have a cover $(U_\alpha)$ such
  that $j_\alpha^*X$ is isomorphic in $\Ho_{ij}(\sK/U_\alpha)$ to a
  product projection $U_\alpha\times F_\alpha$.  Let $QF_\alpha \to
  F_\alpha$ be a $q$-cofibrant replacement; then by
  \autoref{lem:prod-ij-coft}, $U_\alpha\times QF_\alpha$ is an
  $ij$-cofibrant replacement for $U_\alpha\times F_\alpha$.
  Therefore, since $j_\alpha^*X$ is also cofibrant, the composite
  isomorphism $j_\alpha^*X \iso U_\alpha\times QF_\alpha$ in
  $\Ho_{ij}(\sK/B)$ is realized by an $ij$-equivalence in
  $\sK/U_\alpha$.  And since this is an $ij$-equivalence between
  $ij$-cofibrant objects, it is actually an $f$-equivalence.  Thus,
  $X$ is locally $f$-equivalent to a product projection.

  Now, since $j_\alpha^*X$ is $f$-equivalent to an $h$-fibration, it
  is a `halb-fibration'
  (see~\cite{dold:fiberwise-equiv,dold:part1-fib}), and in particular
  a quasifibration.  Since $f$-equivalences and $h$-fibrations are
  preserved by restricting to open subspaces, this is also true of
  $j^* X$ for any open set $j\maps U\into X$ where $U\subset U_\alpha$
  for some $\alpha$, and in particular for finite intersections of the
  $U_\alpha$.  Thus, $B$ has an open cover which is closed under
  finite intersections and over which $X$ is a quasifibration.
  Standard criteria (e.g.~\cite{dold-thom:quasifib}) then imply that
  $X$ itself is a quasifibration.
\end{proof}

\begin{thm}
  If $B$ is a good ancestor, then any locally constant object of
  $\Ho_{ij}(\sK/B)$ is isomorphic in $\Ho_{ij}(\sK/B)$ to a Hurewicz
  fibration.  Therefore, the essential image of $\Ho_q(\sK/B)$ in
  $\Ho_{ij}(\sK/B)$ consists precisely of the locally constant
  objects.
\end{thm}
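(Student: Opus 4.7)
The plan is to produce, for any locally constant $X$, a Hurewicz fibration $Y\to B$ together with an $ij$-equivalence $\eta\maps X\to Y$; since (as remarked just before the theorem) the essential image of $\Ho_q(\sK/B)\to\Ho_{ij}(\sK/B)$ consists precisely of objects isomorphic to $h$-fibrations, this identifies the essential image with the locally constant objects. First I $ij$-cofibrantly replace $X$, so that I may assume $X$ is itself $ij$-cofibrant (local constancy being invariant under $ij$-equivalence). Using local contractibility of the good ancestor $B$, I refine the witnessing cover to a cover $(U_\alpha)$ by contractible opens. By \autoref{lem:ijcoft-qfib}, $X$ is a quasifibration over $B$ and each $j_\alpha^* X$ is $f$-equivalent to a product projection $U_\alpha\times F_\alpha\to U_\alpha$. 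For $Y$, I take the mapping path space $Y=\{(x,\gamma)\in X\times B^I : \gamma(0)=f(x)\}$ with structure map $(x,\gamma)\mapsto\gamma(1)$, where $f\maps X\to B$ is the structure map; then $Y\to B$ is a Hurewicz fibration, and $\eta\maps X\to Y$, $x\mapsto(x,c_{f(x)})$, is a map over $B$ and a deformation retract on underlying spaces.

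The crux is showing $\eta$ is an $ij$-equivalence, i.e.\ that $\uG(V,\eta)$ is a $q$-equivalence for every open $V\subset B$. First I work locally over a single $U_\alpha$: the pullback $j_\alpha^* Y$ is a Hurewicz fibration over the contractible $U_\alpha$, hence $f$-equivalent to a product projection $U_\alpha\times G_\alpha\to U_\alpha$; and on fibers over any $u\in U_\alpha$, $\eta$ induces the inclusion of the strict fiber $X_u$ into the homotopy fiber of $f$ at $u$, which is a $q$-equivalence since $X$ is a quasifibration. The fiber comparison $F_\alpha\to G_\alpha$ is therefore a $q$-equivalence. For any open $V\subset U_\alpha$, the two $f$-equivalences identify $\uG(V,\eta)$ up to homotopy with $\Map(V,F_\alpha)\to\Map(V,G_\alpha)$, which is a $q$-equivalence because $V$ is itself a good ancestor, hence $m$-cofibrant, and $\Map(V,-)$ is right Quillen for the $m$-model structure (so preserves $q$-equivalences between arbitrary spaces, all of which are $h$-fibrant). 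Hence $\uG(V,\eta)$ is a $q$-equivalence on the basis $\mathcal{V}=\{V\subset B\text{ open}:V\subset U_\alpha\text{ for some }\alpha\}$, which is closed under finite intersection.

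To globalize to an arbitrary open $V\subset B$, I appeal to homotopy-sheaf descent. Since every object of $\sK/B$ is $ij$-fibrant, \autoref{hosheaf<->ijstr} guarantees that $S^B X$ and $S^B Y$ are homotopy sheaves, so both $S\circ\uG(-,X)$ and $S\circ\uG(-,Y)$ satisfy \v{C}ech descent. Covering $V$ by the $W_i=V\cap U_{\alpha_i}$ yields a \v{C}ech diagram whose iterated intersections all lie in $\mathcal{V}$, hence on each of which $\eta$ already induces a $q$-equivalence on $\uG$; taking homotopy limits then gives $\uG(V,\eta)$ as a $q$-equivalence. Thus $\eta$ is an $ij$-equivalence, and $X$ is isomorphic in $\Ho_{ij}(\sK/B)$ to the Hurewicz fibration $Y$; combined with the containment in the other direction, the essential image is exactly the locally constant objects.

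I expect the main obstacle to be the globalization in the last paragraph: passing from local $q$-equivalences of sections over the basis $\mathcal{V}$ to the full $ij$-equivalence. The descent route via \autoref{hosheaf<->ijstr} is conceptually clean but requires care in verifying that the \v{C}ech diagrams are sufficiently fibrant for the homotopy-limit comparison to go through. A natural fallback, if that proves delicate, is to invoke \cite[1.4]{may:weq-qfib} pointwise on the mapping spaces $\uG(V,-)$ in the spirit of the proof of \autoref{mstr-into-ijstr}, or to use that $ij$-equivalences correspond under \autoref{hosheaf<->ijstr} to local weak equivalences of simplicial presheaves, a condition detectable on any basis.
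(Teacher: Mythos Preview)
Your argument is correct and follows the same overall strategy as the paper---replace $X$ by an $ij$-cofibrant object, take an $h$-fibrant replacement (your mapping path space $Y$ is exactly the paper's $RX$), and show the replacement map is an $ij$-equivalence by checking locally and then globalizing---but you carry out both the local and global steps with different tools.

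For the local step, the paper argues that the composite $U_\alpha\times F_\alpha\to j_\alpha^*X\to j_\alpha^*RX$ is a $q$-equivalence between $h$-fibrant objects over $U_\alpha$ (five lemma on fibers) and then invokes Lewis's result, quoted as~\cite[6.5]{ij:ex-spaces}, that such a map is automatically an $ij$-equivalence.  You instead argue directly that $\uG(V,\eta)$ is a $q$-equivalence by reducing to $\Map(V,g)$ for an $m$-cofibrant $V$ and a $q$-equivalence $g\colon F_\alpha\to G_\alpha$.  There is a small gap here: the composite $U_\alpha\times F_\alpha\to j_\alpha^*Y\simeq U_\alpha\times G_\alpha$ is a fiberwise map over $U_\alpha$ but need not be a product map, so $\uG(V,-)$ of it is not literally $\Map(V,g)$.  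This is easily fixed: since $U_\alpha$ is contractible, any fiberwise map between trivial fibrations over it is fiber-homotopic to a product map $\mathrm{id}\times g$ (contract $U_\alpha$ in the source variable), after which your argument goes through.

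For the global step, the paper refines to a \emph{numerable} cover and cites~\cite[6.1]{ij:ex-spaces}, a ready-made local-to-global criterion for $ij$-equivalences.  Your \v{C}ech descent argument via \autoref{hosheaf<->ijstr} is a legitimate alternative and does not require numerability; since $S^B$ is right Quillen and every object of $\sK/B$ is $ij$-fibrant, both $S^BX$ and $S^BY$ are indeed fibrant in the homotopy sheaf model structure, and a levelwise weak equivalence between cosimplicial diagrams of Kan complexes induces a weak equivalence on homotopy limits, so the concern you flag does not bite.  The paper's route is shorter given the cited results from~\cite{ij:ex-spaces}; yours trades those citations for the simplicial presheaf machinery already developed in \S\ref{sec:model-struct-homot}, and makes more transparent where hereditary $m$-cofibrancy enters.
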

\begin{proof}
  Let $X$ be locally constant.  Since every object of
  $\Ho_{ij}(\sK/B)$ is isomorphic to an $ij$-cofibrant one, we may
  assume that $X$ is $ij$-cofibrant.  Thus, by
  \autoref{lem:ijcoft-qfib}, there is a cover $(U_\alpha)$ and
  $f$-equivalences
  \begin{equation}
    U_\alpha\times F_\alpha \too j_\alpha^*X.\label{eq:prrepl-over-alpha}
  \end{equation}  
  Moreover, since $B$ is paracompact, we may assume by refinement that
  the cover $(U_\alpha)$ is numerable.

  Now, let $X\to RX$ be an $h$-fibrant replacement; we want to show
  that it is actually an $ij$-equivalence.
  By~\cite[6.1]{ij:ex-spaces}, since the cover $(U_\alpha)$ is
  numerable, it suffices to show that the induced map
  \begin{equation}
    j_\alpha^*X \too j_\alpha^*RX\label{eq:hrepl-over-alpha}
  \end{equation}
  is an $ij$-equivalence for each $\alpha$.  By definition of a
  quasifibration, the map $X\to RX$ induces a $q$-equivalence on all
  fibers, and therefore so does $j_\alpha^*X \to j_\alpha^*RX$.
  Moreover, since~(\ref{eq:prrepl-over-alpha}) is an $f$-equivalence,
  it induces an $h$-equivalence on fibers, and thus the composite
  \begin{equation}
    U_\alpha\times F_\alpha \too j_\alpha^*X \too j_\alpha^*RX\label{eq:comprepl-over-alpha}
  \end{equation}
  induces a $q$-equivalence on all fibers.  But both $U_\alpha\times
  F_\alpha$ and $j_\alpha^*RX$ are $h$-fibrant, so by the five lemma,
  (\ref{eq:comprepl-over-alpha}) is itself a $q$-equivalence.  Again
  since both are $h$-fibrant, \cite[6.5]{ij:ex-spaces} (due to Lewis)
  implies that~(\ref{eq:comprepl-over-alpha}) is an $ij$-equivalence.

  Finally, since~(\ref{eq:prrepl-over-alpha}) is also an
  $ij$-equivalence, the 2-out-of-3 property implies
  that~(\ref{eq:hrepl-over-alpha}) is too.  This shows that $X\to RX$
  is an $ij$-equivalence, and thus $X$ is isomorphic in $\Ho_{ij}(\sK/B)$ to
  the Hurewicz fibration $RX$.
\end{proof}

\section{Base change and homotopy sheaves}
\label{sec:base-change}

We now consider the relationship between the base change functors for
parametrized spaces and for homotopy sheaves.  This is nontrivial
because $f^*$ has a \emph{left} derived functor $\bfL_{sh} f^* \iso
\bfL_{ij} f^*$ for homotopy sheaves but a \emph{right} derived functor
$\bfR_q f^*$ for parametrized spaces.  However, we will prove that the
two agree up to homotopy.  Recall that we write $\iota^\star\maps
\Ho_{ij}(\sK/B) \toot \Ho_q(\sK/B) \spam \iota_\star$ for the right Quillen
embedding from \S\ref{sec:comparison}.

\begin{thm}\label{basechange-iso}
  For any map $f\maps A\to B$ between good ancestors, we have a
  natural isomorphism
  \begin{equation}
    \bfL_{ij} f^* \circ \iota_\star \iso \iota_\star \circ \bfR_q f^*.\label{eq:basechange-iso}
  \end{equation}
  in both the sectioned and unsectioned cases.
\end{thm}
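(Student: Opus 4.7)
The plan is to exhibit the comparison map on the point-set level and reduce to the locally contractible case already handled by \autoref{lem:ijrepl-hfibt-qeqv}.

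First I identify both sides concretely. A class in $\Ho_q(\sK/B)$ is represented by a Hurewicz fibration $X\to B$; then $\iota_\star \bfR_q f^* X$ is just $f^*X$, again a Hurewicz fibration, viewed in $\Ho_{ij}(\sK/A)$. On the other side, if $QX\to X$ is an $ij$-cofibrant replacement, then $\bfL_{ij}f^*\iota_\star X$ is represented by $f^*QX$. The morphism $QX\to X$ induces a canonical comparison map $f^*QX \to f^*X$ in $\sK/A$, which is precisely the natural transformation coming from the derived adjunctions, so~(\ref{eq:basechange-iso}) amounts to showing this map is an $ij$-equivalence; naturality is then automatic.

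Next I localize on $B$. Since $B$ is a good ancestor it is locally contractible, so I cover it by contractible open sets $U_\alpha\into B$ and refine, using hereditary paracompactness, to assume the cover is numerable. Writing $V_\alpha = f^{-1}(U_\alpha)$ with inclusions $k_\alpha\maps V_\alpha\into A$ and restrictions $f_\alpha\maps V_\alpha\to U_\alpha$, the base-change identity $k_\alpha^* f^* = f_\alpha^* j_\alpha^*$ identifies the restriction of the comparison map to $V_\alpha$ with
\[f_\alpha^*(j_\alpha^* QX) \too f_\alpha^*(j_\alpha^* X).\]
Because $j_\alpha^*$ preserves $ij$-cofibrations and $ij$-equivalences, $j_\alpha^* QX \to j_\alpha^* X$ is an $ij$-cofibrant replacement of the Hurewicz fibration $j_\alpha^*X$ over the contractible good ancestor $U_\alpha$, so \autoref{lem:ijrepl-hfibt-qeqv} delivers exactly that this map is an $ij$-equivalence over $V_\alpha$. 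To promote this to a global statement over $A$, I appeal to the numerable-cover criterion~\cite[6.1]{ij:ex-spaces} already invoked in \autoref{mstr-into-ijstr} and in \S\ref{sec:essimg}: a map in $\sK/A$ that restricts to an $ij$-equivalence on each member of a numerable open cover of $A$ is itself an $ij$-equivalence. The sectioned case follows immediately: \autoref{lem-pointed-qemb} ensures everything has a sectioned analogue, and a sectioned $ij$-cofibrant replacement may be obtained from an unsectioned one by adjoining the section, so the identical map $f^*QX\to f^*X$ does the job.

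The only real obstacle I anticipate is checking that \autoref{lem:ijrepl-hfibt-qeqv} applies in exactly the form ``the pullback along an arbitrary map out of a contractible base of an $ij$-cofibrant replacement of a Hurewicz fibration is an $ij$-equivalence'', but that lemma was set up with this application in mind. Once it is in hand everything else is essentially the local-to-global bookkeeping of \autoref{mstr-into-ijstr} performed one level down, on $A$ rather than on $B$.
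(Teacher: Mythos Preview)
Your proposal is correct and follows essentially the same route as the paper: identify the comparison map as $f^*QX\to f^*X$ for an $h$-fibration $X$ and an $ij$-cofibrant replacement $QX$, cover $B$ numerably by contractible opens, restrict and invoke \autoref{lem:ijrepl-hfibt-qeqv} over each $U_\alpha$, and then assemble using the numerable-cover criterion~\cite[6.1]{ij:ex-spaces}. The only place where your argument diverges from the paper is the sectioned case: rather than ``adjoining the section'' to an unsectioned replacement, the paper simply takes the sectioned $ij$-cofibrant replacement and observes that, because the terminal object of $\sK/B$ is $ij$-cofibrant, forgetting the section yields an unsectioned $ij$-cofibrant replacement, so the unsectioned result applies directly---this is cleaner than what you sketch and avoids any ambiguity about what ``adjoining the section'' means.
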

\begin{proof}
  We prove the unsectioned case first.  Since $\iota_\star$ and
  $\bfR_q f^*$ are both right derived functors for the same model
  structures, their composition is just given by their point-set
  composite applied to a fibrant object; in other words, $f^*X$ where
  $X\to B$ is an $h$-fibration.  On the other hand, $\iota_\star X$ is
  again $X$ (when $X$ is $h$-fibrant), but to compute $\bfL_{ij} f^*
  (\iota_\star X)$ we must replace $X$ by an $ij$-cofibrant object
  $QX$.  Since this comes with an $ij$-equivalence $QX \we X$, we have
  a canonical map
  \begin{equation}
    f^* Q X  \to f^* X\label{eq:basechange-comp-we}
  \end{equation}
  which represents a map
  \begin{equation}
    \bfL_{ij} f^* (\iota_\star X) \to \iota_\star (\bfR_q f^* X).\label{eq:basechange-comp-iso}
  \end{equation}
  In the terminology of~\cite{shulman:dblderived}, this is the `derived
  natural transformation' of the point-set level equality $f^*\circ \Id
  = \Id \circ f^*$.

  We claim that~(\ref{eq:basechange-comp-iso}) is an isomorphism, or
  equivalently that~(\ref{eq:basechange-comp-we}) is an
  $ij$-equivalence.  Let $(U_\alpha)$ be a numerable cover of $B$ by
  contractible opens.  Then
  \begin{equation}
    j_\alpha^* QX \to j_\alpha^* X\label{eq:local-repl}
  \end{equation}
  is again an $ij$-cofibrant replacement in $\sK/U_\alpha$.  By
  \autoref{lem:ijrepl-hfibt-qeqv}, since $U_\alpha$ is contractible
  and $X$ is $h$-fibrant, any pullback of~\eqref{eq:local-repl} to
  another good ancestor is an $ij$-equivalence.

  In particular, this applies to the pullback along the restriction
  $f_\alpha\maps f^{-1}(U_\alpha) \to U_\alpha$ of $f$; thus the map
  \[f_\alpha^* j_\alpha^* QX \to f_\alpha^* j_\alpha^* X
  \]
  is an $ij$-equivalence.  But if we write $i_\alpha\maps
  f^{-1}(U_\alpha) \into A$ for the inclusion, then we have $fi_\alpha
  = j_\alpha f_\alpha$ and hence $f_\alpha^* j_\alpha^* \iso
  i_\alpha^* f^*$, so the map
  \[i_\alpha^* f^* QX \to i_\alpha^* f^* X
  \]
  is also an $ij$-equivalence over $f^{-1}(U_\alpha)$ for all
  $\alpha$.  Since the cover $(f^{-1}(U_\alpha))$ of $A$ is also
  numerable, it follows from~\cite[6.1]{ij:ex-spaces} that $f^*QX\to
  f^*X$ is an $ij$-equivalence over $A$, as desired.

  In the sectioned case, we again have a map
  \[\bfL_{ij} f^* (\iota_\star X) \to \iota_\star (\bfR_q f^* X)\]
  represented by the map
  \begin{equation}
    f^* Q X \to f^* X,\label{eq:ptd-basechange-comp-we}
  \end{equation}
  where $X$ is $h$-fibrant in $\sK_B$ and now $Q$ denotes
  $ij$-cofibrant replacement in $\sK_B$.  But if we forget the
  sections, we see that $X$ is also $h$-fibrant in $\sK/B$, and since
  the terminal object of $\sK/B$ is $ij$-cofibrant, $QX$ is also an
  $ij$-cofibrant replacement in $\sK/B$.  Thus, applying the result
  for the unsectioned case, we see
  that~\eqref{eq:ptd-basechange-comp-we} is an $ij$-equivalence in
  $\sK/A$, hence also in $\sK_A$.
\end{proof}

Since $\iota_\star$ has a left adjoint $\iota^\star$, the
isomorphism~(\ref{eq:basechange-iso}) has a `mate'
\begin{equation}\label{eq:basechange-mate-1}
  \iota^\star \circ \bfL_{ij} f^* \too \bfR_q f^*\circ \iota^\star.
\end{equation}
Similarly, since $\bfL_{ij} f^*$ has a right adjoint $\bfR_{ij} f^*$,
and, in the sectioned case, $\bfR_q f^*$ has a partial right adjoint
$\bfM f_*$ defined on connected spaces (obtained using Brown
representability),~(\ref{eq:basechange-iso}) has another `partial
mate'
\begin{equation}\label{eq:basechange-mate-2}
  \iota_\star\circ \bfM f_* \too \bfR_{ij} f_*\circ \iota_\star
\end{equation}
defined on subcategories of connected spaces.  (The `M' may stand
either for `middle' or `mysterious'.)

If $f$ is a bundle of cell complexes, then $f^*$ is also left Quillen
for the $q$-structures, so $\bfR_q f^* = \bfL_q f^*$ also has a
totally defined right adjoint $\bfR_q f_*$.  In this case we have an
analogous transformation
\begin{equation}\label{eq:basechange-mate-3}
  \iota_\star\circ \bfR_q f_* \too \bfR_{ij} f_*\circ \iota_\star
\end{equation}
which is defined everywhere.

Standard categorical arguments show that~(\ref{eq:basechange-mate-2})
or~(\ref{eq:basechange-mate-3}) is an isomorphism if and only
if~(\ref{eq:basechange-mate-1}) is.  Thus, since $\bfM f_*$ is
difficult to get a handle on, it is natural to focus our efforts
on~(\ref{eq:basechange-mate-1}) instead.  The main result is the
following.  This is a special case of the results
of~\cite{shulman:dblderived} regarding mates of derived natural
transformations.

\begin{prp}\label{basechange-mate-ident}
  The transformation~(\ref{eq:basechange-mate-1}) at an $ij$-cofibrant
  space $X$ over $B$ is isomorphic to the map
  \begin{equation}\label{eq:basechange-mate-path}
    f^* X \too f^* RX
  \end{equation}
  where $X\to RX$ is an $h$-fibrant replacement.
\end{prp}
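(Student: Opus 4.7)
The plan is to unpack the definition of the mate (\ref{eq:basechange-mate-1}) as a composite built from the unit and counit of $\iota^\star \adj \iota_\star$ together with the isomorphism of \autoref{basechange-iso}, and then evaluate this composite explicitly on an $ij$-cofibrant $X$. The mate is the composite
\[\iota^\star \bfL_{ij} f^* \too \iota^\star \bfL_{ij} f^* \iota_\star \iota^\star \too \iota^\star \iota_\star \bfR_q f^* \iota^\star \too \bfR_q f^* \iota^\star,\]
assembled from the unit $\eta$, the whiskered isomorphism $\alpha$, and the counit $\epsilon$. Since $\iota^\star$ is the left derived of the identity (so it acts as the identity on $ij$-cofibrant representatives) and $\iota_\star$ is the right derived of the identity, implemented by an $h$-fibrant replacement $R$, at an $ij$-cofibrant $X$ the unit $\eta_X$ is represented by the map $X \to RX$, while at an $h$-fibrant $Y$ the counit $\epsilon_Y$ is represented by an $ij$-cofibrant replacement $QY \to Y$.

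Next I identify each arrow in the composite at $X$. After $ij$-cofibrantly replacing the target $RX$ by $Q(RX)$, the first contribution $\iota^\star \bfL_{ij} f^*(\eta_X)$ is represented by $f^* X \to f^* Q(RX)$, namely $f^*$ applied to any lift of $X \to RX$ through the trivial $ij$-fibration $Q(RX) \to RX$. By \autoref{basechange-iso}, the component $\alpha_{\iota^\star X}$ is represented in $\Ho_{ij}$ by the $ij$-equivalence $f^* Q(RX) \to f^* RX$; whiskering by $\iota^\star$ then requires a cofibrant lift through $Q(f^* RX) \to f^* RX$, producing a map $f^* Q(RX) \to Q(f^* RX)$ in $\Ho_q$. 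Finally, the counit at $[f^* RX]_q$ is represented by $Q(f^* RX) \to f^* RX$.

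Composing these, the total representative of the mate at $X$ is the zigzag
\[f^* X \too f^* Q(RX) \too Q(f^* RX) \too f^* RX.\]
By construction of the chosen lifts, the last two arrows compose to the $f^*$-image of $Q(RX) \to RX$, and the full composite then equals $f^*$ applied to $X \to Q(RX) \to RX = X \to RX$. Hence in $\Ho_q(\sK/A)$ the mate is represented by the single map $f^*X \to f^*RX$, as desired. The main subtle point is bookkeeping: ensuring that compatible lifts can be chosen (they can, by the usual lifting property), and observing that the replacement $Q(f^* RX) \to f^* RX$ is in fact a $q$-equivalence since $f^* RX$ is $h$-fibrant as a pullback of an $h$-fibration, so by \autoref{mstr-into-ijstr} the intermediate arrow is invertible in $\Ho_q$ and the zigzag genuinely collapses to the expected map. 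This coherence of lifts and replacements is precisely the general content of the mate calculus for derived natural transformations developed in \cite{shulman:dblderived}, to which one could alternately appeal for an abstract argument.
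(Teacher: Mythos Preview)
Your proof is correct and follows essentially the same approach as the paper: both unfold the mate as the composite of unit, the isomorphism from \autoref{basechange-iso}, and counit, and then trace through explicit point-set representatives to see that the composite collapses to $f^*X\to f^*RX$. The only difference is bookkeeping---the paper applies a functorial $ij$-cofibrant replacement $Q$ uniformly (including redundantly to the already-cofibrant $X$), whereas you exploit the cofibrancy of $X$ and $f^*Q(RX)$ directly and use lifting to produce the intermediate map; both routes lead to the same commutative diagram and the same conclusion.
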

\begin{proof}
  The map~(\ref{eq:basechange-mate-1}) is defined to be the composite
  \begin{equation}
    \iota^\star \circ \bfL_{ij} f^* \too \iota^\star \circ\bfL_{ij} f^*
    \circ\iota_\star \circ \iota^\star \too[\iso] \iota^\star \circ
    \iota_\star \circ\bfR_q f^* \circ \iota^\star \too \bfR_q f^* \circ
    \iota^\star,\label{basechange-mate1-composite}
  \end{equation}
  where the first map is the unit, and the last the counit, of the
  adjunction $\iota^\star\adj \iota_\star$.  We now trace this through
  on the point-set level.

  We start with an $ij$-cofibrant object $X\to B$, so that
  $\iota^\star(\bfL_{ij} f^* X)$ is given simply by $f^* X$.  The
  first map is the unit of $\iota^\star\adj \iota_\star$ at $X$, which
  is just the map $X \to RX$.  Since we must apply $\bfL_{ij} f^*$ and
  then $\iota^\star$ to this, the first map is actually represented on
  the point-set level by
  \begin{equation}
    Qf^* QX \too Qf^* QRX\label{eq:bc-mate1-comp-1}
  \end{equation}
  where $Q$ denotes $ij$-cofibrant replacement.  We have a diagram
  \begin{equation}
    \xymatrix{Qf^* QX \ar[r]^\sim\ar[d] & f^* QX \ar[d] \ar[r]^\sim & f^* X \ar[d]\\
      Q f^* QRX\ar[r]_\sim & f^* QRX \ar[r] & f^* RX,}\label{eq:bc-mate1-comp-2}
  \end{equation}
  where the left-hand vertical arrow is~(\ref{eq:bc-mate1-comp-1}),
  and in which the marked arrows are $ij$-equivalences, the top-right
  one since $X$ is already $ij$-cofibrant.  The diagram commutes by
  the naturality of $Q$ and $R$.

  We must then compose this with the
  isomorphism~(\ref{eq:basechange-iso}) at $RX$, which is obtained by
  applying $f^*$ to the $ij$-cofibrant replacement map $QRX \to RX$.
  In our case, we must then apply $\iota^\star$ to this, which
  involves another $ij$-cofibrant replacement; thus the middle
  isomorphism in~(\ref{basechange-mate1-composite}) is represented on
  the point-set level by
  \begin{equation}
    Q f^* QRX \too Qf^* RX.\label{eq:bc-mate1-comp-3}
  \end{equation}
  We can add this map to~(\ref{eq:bc-mate1-comp-2}), together with
  another square which commutes by naturality, to obtain the
  following.
  \begin{equation}
    \xymatrix{Qf^* QX \ar[r]^\sim\ar[d] & f^* QX \ar[d] \ar[r]^\sim & f^* X \ar[d]\\
      Q f^* QRX\ar[r]_\sim \ar[dr] &
      f^* QRX \ar[r] & f^* RX.\\
      & Q f^* RX \ar[ur]
    }\label{eq:bc-mate1-comp-4}
  \end{equation}

  Finally, we must compose with the counit of $\iota^\star\adj
  \iota_\star$ at $Qf^* RX$, which is simply the map $Qf^*RX \to
  f^*RX$ at the bottom right of~(\ref{eq:bc-mate1-comp-4}).  Hence, by
  the commutativity of~(\ref{eq:bc-mate1-comp-4}), the composite of
  all three is equal to the composite
  \[Q f^* QX \too[\sim] f^* QX \too[\sim] f^* X \too f^* RX.
  \]
  The first two maps are $ij$-equivalences between $ij$-cofibrant
  objects, hence $f$-equiv\-alences and so also $q$-equivalences.
  Thus, modulo these isomorphisms,~(\ref{eq:basechange-mate-1}) is
  equal to~(\ref{eq:basechange-mate-path}).
\end{proof}

This enables us to show easily that~(\ref{eq:basechange-mate-1}) is an
isomorphism in some cases.

\begin{thm}\label{basechange-iso-mate}
  The transformations~(\ref{eq:basechange-mate-1})
  and~(\ref{eq:basechange-mate-2}) are isomorphisms whenever $f$ is a
  $q$-fibration between good ancestors, as
  is~(\ref{eq:basechange-mate-3}) when $f$ is a bundle of cell
  complexes between good ancestors.
\end{thm}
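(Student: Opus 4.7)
The plan is to reduce everything to showing that the natural transformation~\eqref{eq:basechange-mate-1} is an isomorphism whenever $f$ is a $q$-fibration; by the standard mate identity recorded just before \autoref{basechange-mate-ident}, this automatically yields~\eqref{eq:basechange-mate-2} and~\eqref{eq:basechange-mate-3} in their respective settings as mates of the same isomorphism.

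By \autoref{basechange-mate-ident}, the component of~\eqref{eq:basechange-mate-1} at an $ij$-cofibrant object $X\in\sK/B$ is represented on the point-set level by the map $f^* X \to f^* RX$ induced by an $h$-fibrant replacement $X \to RX$. Since such a replacement is in particular a $q$-equivalence, it suffices to show that when $f$ is a $q$-fibration, the point-set functor $f^*\maps \sK/B \to \sK/A$ preserves $q$-equivalences.

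For this I would invoke right properness of the $q$-model structure on $\sK$. Given any map $g\maps Y \to Y'$ in $\sK/B$, the square
\[\xymatrix{A \times_B Y \ar[r]^{f^* g}\ar[d] & A \times_B Y' \ar[d] \\ Y \ar[r]_{g} & Y'}\]
is cartesian, and its right-hand vertical edge is a pullback of $f$, hence a $q$-fibration. Right properness therefore gives that $f^* g$ is a $q$-equivalence whenever $g$ is. Applied to $g = (X \to RX)$, this proves that~\eqref{eq:basechange-mate-1} is an isomorphism, and hence so is~\eqref{eq:basechange-mate-2}.

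For the bundle-of-cell-complexes case I would observe that such a bundle, being locally trivial with CW fibers over a paracompact Hausdorff base, is in particular a Serre fibration (the HLP over disks follows by local triviality and a partition of unity), so the preceding argument gives~\eqref{eq:basechange-mate-1}; the mate identity then yields~\eqref{eq:basechange-mate-3}, now with the fully defined $\bfR_q f_*$ in place of the partial adjoint $\bfM f_*$. The main obstacle is purely conceptual: once \autoref{basechange-mate-ident} has collapsed the question to whether $f^*$ carries the single $q$-equivalence $X \to RX$ to a $q$-equivalence, nothing remains except an appeal to right properness and the formal fact that mates of an isomorphism are isomorphisms; no new point-set topology is required.
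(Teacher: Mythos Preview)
Your proposal is correct and follows essentially the same line as the paper's own proof: reduce via \autoref{basechange-mate-ident} to showing that $f^*(X\to RX)$ is a $q$-equivalence, then use that $f^*$ preserves $q$-equivalences when $f$ is a $q$-fibration. The paper simply cites~\cite[7.3.4]{maysig:pht} for this last fact, whereas you spell it out via right properness of the $q$-model structure; your extra remark that a bundle of cell complexes is a Serre fibration is likewise implicit in the paper's argument.
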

\begin{proof}
  As observed in~\cite[7.3.4]{maysig:pht}, $f^*$ preserves all
  $q$-equivalences when $f$ is a $q$-fibration, and $X\to RX$ is
  certainly a $q$-equivalence.
\end{proof}

However, we can also use \autoref{basechange-mate-ident} to construct
counterexamples in which (\ref{eq:basechange-mate-1}), and
hence~(\ref{eq:basechange-mate-2}), is not an isomorphism.  This
phenomenon is closely related to~\cite[0.0.1]{maysig:pht}.

\begin{ceg}
  Let $f\maps A\to B$ be a map between good ancestors, where $B$ is
  path connected, and let $U\subset B$ be an open set disjoint from
  the image $f(A)$.  Then $U\to B$ is an $ij$-cofibrant object of
  $\sK/B$ and $f^*U = \emptyset$, hence $\iota^\star (\bfL_{ij} f^* U)
  = \emptyset$ as well.  However, since $B$ is path-connected, there
  are paths connecting points in $f(A)$ to points in $U$, so $f^*RU$
  will not be empty; thus $\bfR_q f^*(\iota^\star U)$ is not empty
  and~(\ref{eq:basechange-mate-1}) is not an equivalence.
\end{ceg}

This very general example makes us suspect
that~(\ref{eq:basechange-mate-1}) will not be an isomorphism for
`most' maps $f$.  In fact, any map $f\maps A\to B$ for
which~(\ref{eq:basechange-mate-1}) is an isomorphism must be `almost a
fibration' in the following sense.  Any open $U\subset B$ is
$ij$-cofibrant as an object of $\sK/B$, so
if~(\ref{eq:basechange-mate-1}) is an isomorphism at $U$, the map
\[f^* U \too f^* RU
\]
must be a $q$-equivalence.  But $f^*U$ is just $f^{-1}(U)$, so this
says that the preimage of $U$ is equivalent to its `homotopy
preimage'.  The analogous statement for \emph{points}, rather than
open sets, is what characterizes a quasifibration.  We conjecture
that~(\ref{eq:basechange-mate-1}) being an isomorphism implies that
$f$ is actually a quasifibration, but we have so far been unable to
prove this.

\begin{rmk}
  We noted in \S\ref{sec:model-struct-homot} that when $f$ is an
  embedding, the adjunction $f_!\adj f^*$ is also Quillen for the
  $ij$-model structures.  Therefore, in this case the left derived
  functor $\bfL_{ij} f^*$ is isomorphic to the right derived functor
  $\bfR_{ij} f^*$, so the isomorphism~(\ref{eq:basechange-iso})
  follows formally because all functors involved are Quillen right
  adjoints and they commute on the point-set level.  It follows that
  we also have an isomorphism
  \begin{equation}\label{eq:basechange-mate-lefts}
    \iota^\star \circ \bfL_{ij} f_! \iso \bfL_q f_! \circ \iota^\star.
  \end{equation}
  Moreover, in this case we have a canonical transformation
  \begin{equation}\label{eq:basechange-mate-lefts2}
    \bfL_{ij} f_! \circ \iota_\star \too \iota_\star \circ \bfL_q f_!
  \end{equation}
  which is represented on the point-set level by the composite
  \[f_!(Q_{ij}X) \too f_!X \too R (f_!X).
  \]
  Here $Q_{ij}$ denotes $ij$-cofibrant replacement, $R$ denotes
  $h$-fibrant replacement, and $X$ is assumed $m$-cofibrant and
  $h$-fibrant over $A$ (which is a subspace of $B$).  Since
  $f_!(Q_{ij}X)$ is supported only on $A$, while $R (f_!X)$ is
  supported on all path-components of $B$ which intersect $A$, this
  can only be an $ij$-equivalence if $A$ is a union of path components
  of $B$.
\end{rmk}

We end this section with some remarks about the potential utility of
\autoref{basechange-iso-mate} for computing the mysterious functor
$\bfM f_*$.  The fact that $f^*\adj f_*$ is Quillen for the $q$-model
structures whenever $f$ is a bundle of cell complexes implies that in
this case, $\bfM f_*$ is isomorphic to $\bfR_q f_*$ and thus may be
computed by first applying $q$-fibrant replacement and then the
point-set level functor $f_*$.  In particular, this applies when $f$
is the projection $r\maps A\to *$ for a cell complex $A$, giving a way
to compute `fiberwise generalized cohomology'.

By comparison, \autoref{basechange-iso-mate} tells us that if $f$ is
any $q$-fibration between good ancestors, then $\bfM f_*$ may be
computed by first applying an $h$-fibrant replacement and then the
point-set level $f_*$.  This is slightly better since it applies to
$q$-fibrations which are not necessarily bundles.  However, since our
spaces must essentially be open subspaces of locally compact CW
complexes, it doesn't give a way to compute fiberwise generalized
cohomology for many new base spaces.

\section{$G$-spaces and $BG$-spaces}
\label{sec:g-spaces-bg}

We now consider the homotopy-theoretic version of the equivalence
between locally constant sheaves and $\pi_1$-sets.  Our intuition is
that spaces parametrized over $A$ should be equivalent to spaces with
an action of the `fundamental $\infty$-groupoid' $\Pi_\infty(A)$.
Topologically speaking, at least if $A$ is connected, $\Pi_\infty(A)$
can be represented by the loop space $\Omega A$ (where we choose a
base point arbitrarily).  We can choose a topological model for
$\Omega A$, such as the Moore loop space or the realization of the Kan
loop group, which is a grouplike topological monoid; then $A$ can be
reconstructed, up to $q$-equivalence, as the classifying space of
$\Omega A$.

Moreover, if $A$ is $m$-cofibrant, then so is $\Omega A$
by~\cite{milnor:cw-type}.  Since the homotopy theory of parametrized
spaces is invariant under $q$-equivalences of the base space, it is
harmless to assume that $A$ is $m$-cofibrant.  Thus, for the rest of
this section we make the following assumption.

\begin{assume}\label{assume:g}
  $G$ is a compactly generated $m$-cofibrant grouplike topological
  monoid whose identity is a nondegenerate basepoint (that is, $*\to
  G$ is an $h$-cofibration).
\end{assume}

Of course, we are thinking of $G=\Omega A$ for a connected
$m$-cofibrant space $A$ which admits a nondegenerate basepoint.  We
intend to compare the homotopy theory of spaces with a $G$-action to
the homotopy theory of spaces parametrized over $BG$.  The results in
this section are basically folklore.  A bijection between equivalence
classes can be found in the survey
article~\cite{stasheff:hsp-classif}, and a full equivalence of
homotopy theories using simplicial fibrations can be found
in~\cite{ddk:equivar-she,dk:equivar->fibr}; our use of the $m$-model
structure on $\sK/BG$ will allow us to prove the strong result while
using only topological spaces.

We will also need a model structure on $G\sK$, the category of (left)
$G$-spaces and $G$-equivariant maps.  If $G$ is a topological group
and $\calH$ is a set of closed subgroups of $G$, there is a
cofibrantly generated model structure on $G\sK$ in which the weak
equivalences are the $G$-maps which induce $q$-equivalences on
$H$-fixed point spaces for all $H\in\calH$; we may call this the
$q\calH$-model structure.  This is most frequently used in equivariant
homotopy theory when \calH\ is the set of all closed subgroups of $G$;
see, for example,~\cite{may:alaska}.  However, we will be interested
instead in the case when $\calH$ consists only of the trivial subgroup
$\{e\}$.  We call this the \textbf{$qe$-model structure} and refer to
its weak equivalences as \textbf{$e$-equivalences}.  This model
structure exists for any topological monoid $G$.

We now construct a Quillen equivalence between the $qe$-model
structure on $G\sK$ and the $m$-model structure on $\sK/BG$.  There is
an obvious functor from $G\sK$ to $\sK/BG$ given by the Borel
construction; a $G$-space $X$ is mapped to $EG\times_{G} X =
B(*,G,X)$, equipped with its projection to $BG = B(*,G,*)$.  This
functor has a right adjoint, which takes a space $Y\to BG$ over $BG$
to the space $\Map_{BG}(EG,Y)$ of maps from $EG$ to $Y$ over $BG$,
equipped with the left $G$-action induced from the right action of $G$
on $EG$.  Thus we have an adjoint pair
\begin{equation}\label{eq:g-bg-adjn}
  B(*,G,-)\,\maps\; G\sK \toot \sK/BG \; \spam\, \Map_{BG}(EG,-).
\end{equation}
Since $EG$ is contractible, we can think of $\Map_{BG}(EG,Y)$ as a
`homotopy fiber' of $Y$ which is chosen in a clever way so as to
inherit a strict $G$-action.  Our first observation is that this
intuition is precise when $Y$ is fibrant.

\begin{lemma}\label{eg-htpy-fiber}
  Under \autoref{assume:g}, if $Y\to BG$ is an $h$-fibration, the map
  \begin{equation}\label{eq:eg-htpy-fiber}
    \Map_{BG}(EG,Y) \too \Map_{BG}(*,Y) = \mathrm{fib}(Y),
  \end{equation}
  induced by the inclusion of the basepoint $*\to EG$, is an
  $h$-trivial $h$-fibration.
\end{lemma}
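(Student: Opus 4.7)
The plan is to verify separately that the map \eqref{eq:eg-htpy-fiber} is an $h$-fibration and an $h$-equivalence.

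For the $h$-fibration part, I would use that the $m$-model structure on $\sK/BG$ is $\sK$-enriched, so cotensoring a Hurewicz cofibration in $\sK/BG$ with an $h$-fibration over $BG$ yields an $h$-fibration in $\sK$. Viewed as a morphism over $BG$, the inclusion $* \into EG$ is a closed Hurewicz cofibration: under \autoref{assume:g} the simplicial space $B_\bullet(*,G,G)$ is Reedy $h$-cofibrant, so its realization $EG$ is $h$-cofibrant with nondegenerate basepoint. Cotensoring this cofibration against $Y \to BG$ then yields \eqref{eq:eg-htpy-fiber} as an $h$-fibration in $\sK$.

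For the $h$-equivalence part, I would pass to the pullback $\widetilde Y := EG \times_{BG} Y$, noting that $\Map_{BG}(EG,Y) \iso \uG(EG,\widetilde Y)$ and that \eqref{eq:eg-htpy-fiber} is then evaluation at the basepoint $\mathrm{ev}_* \maps \uG(EG,\widetilde Y) \to F$, where $F$ is the fibre of $Y$ over the basepoint of $BG$. Since $\widetilde Y \to EG$ is an $h$-fibration over the contractible space $EG$ (contractibility coming from the extra degeneracy in the bar construction), Dold's theorem supplies an $f$-equivalence $\phi \maps \widetilde Y \eqv EG \times F$ over $EG$. Comparing against the trivial fibration produces a naturality square
\[ \xymatrix{ \uG(EG,\widetilde Y) \ar[r]^-{\mathrm{ev}_*} \ar[d]_{\phi_*} & F \ar[d]^{\phi|_*} \\ \Map(EG,F) \ar[r]_-{\mathrm{ev}_*} & F } \]
whose left vertical is an $h$-equivalence (as $f$-equivalences induce $h$-equivalences on sections), whose right vertical is an $h$-equivalence (the restriction of an $f$-equivalence to a fibre), and whose bottom horizontal is an $h$-equivalence (since $EG$ is contractible with nondegenerate basepoint, so constant maps give a homotopy inverse to evaluation). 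Two-out-of-three forces the top horizontal, which is \eqref{eq:eg-htpy-fiber}, to be an $h$-equivalence.

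The main obstacle I anticipate is verifying the two point-set facts about $EG$ invoked above: that $* \into EG$ is a closed Hurewicz cofibration and that $EG$ is contractible. Both rest on the Reedy $h$-cofibrancy of $B_\bullet(*,G,G)$, which is precisely why \autoref{assume:g} requires $* \into G$ to be an $h$-cofibration and $G$ to be $m$-cofibrant. Once this point-set groundwork is in place, the remainder of the argument is purely formal, and in particular the compatibility of the $f$-equivalence $\phi$ with any specific trivialization on the distinguished fibre is not needed, since the naturality-square argument handles any choice of $\phi$ uniformly.
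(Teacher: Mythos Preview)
Your argument is correct, but the paper's proof is shorter because it does not split the claim into two pieces.  The paper observes that $*\to EG$ is not merely an $h$-cofibration but an $h$-\emph{trivial} $h$-cofibration (since $EG$ is contractible).  The monoidal axiom for the $h$-model structure on $\sK$ then says that the pullback corner map
\[
\Map(EG,Y)\too Y\times_{BG}\Map(EG,BG)
\]
is an $h$-trivial $h$-fibration, and \eqref{eq:eg-htpy-fiber} is obtained from it by a single pullback.  This yields ``trivial'' and ``fibration'' simultaneously.  Your first step is essentially the enriched form of the same axiom, but since you only record that $*\to EG$ is an $h$-cofibration, you extract only the fibration half and must then supply the equivalence half separately via Dold's theorem and the 2-out-of-3 square.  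That second argument is fine (and is a nice illustration of why contractibility of $EG$ is what drives the result), but it is doing by hand exactly what the acyclic-cofibration case of the pushout-product axiom gives for free.  One small wording issue: you invoke the $m$-model structure but then feed it a Hurewicz cofibration; either say $h$-model structure, or note that $*\to EG$ is in fact an $m$-cofibration because both ends are $m$-cofibrant.
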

\begin{proof}
  The map $*\to EG$ is an $h$-equivalence, and \autoref{assume:g}
  ensures that it is also an $h$-cofibration.  Thus, since the
  $h$-model structure on \sK\ is monoidal, the induced pullback corner
  map
  \begin{align*}
    \Map(EG,Y) \too& \Map(*,Y) \times_{\Map(*,BG)} \Map(EG,BG)\\
    =&\; Y \times_{BG} \Map(EG,BG)
  \end{align*}
  is an $h$-trivial $h$-fibration.  Since~\eqref{eq:eg-htpy-fiber} is
  the pullback of this map along
  \[i\times q\maps \mathrm{fib}(Y) \too Y \times_{BG} \Map(EG,BG),\]
  where $i\maps \mathrm{fib}(Y)\into Y$ is the inclusion and $q\maps
  *\to \Map(EG,BG)$ picks out the canonical map $EG\to
  BG$,~\eqref{eq:eg-htpy-fiber} is also an $h$-trivial $h$-fibration.
\end{proof}

\begin{thm}\label{g-bg-equiv}
  Under \autoref{assume:g}, the adjunction~\eqref{eq:g-bg-adjn} is a
  Quillen equivalence between the $qe$-model structure and the
  $m$-model structure.
\end{thm}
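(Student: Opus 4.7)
The plan is to verify the adjunction is Quillen and then apply the dual of Hovey's criterion \cite[1.3.16]{hovey:modelcats}: it suffices to check that $\Map_{BG}(EG,-)$ reflects weak equivalences between $h$-fibrant objects and that the derived counit is a $q$-equivalence at every $h$-fibration over $BG$. The core technical input throughout is the identification of $\Map_{BG}(EG,-)$ with the ordinary fiber functor up to $h$-equivalence supplied by \autoref{eg-htpy-fiber}. For the Quillen pair itself, note that the $qe$-(trivial) fibrations are characterized as the $q$-(trivial) fibrations on underlying spaces, so I would show that $\Map_{BG}(EG,-)$ sends $h$-fibrations $Y\to Y'$ in $\sK/BG$ to $h$-fibrations of spaces: under \autoref{assume:g} the realization $EG$ is $h$-cofibrant, and since the $h$-model structure on $\sK$ is monoidal, $\Map(EG,Y)\to\Map(EG,Y')$ is an $h$-fibration; the map $\Map_{BG}(EG,Y)\to\Map_{BG}(EG,Y')$ is obtained by pulling this back along the point of $\Map(EG,BG)$ picking out $EG\to BG$, and so is again an $h$-fibration. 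When $Y\to Y'$ is moreover a $q$-equivalence, \autoref{eg-htpy-fiber} identifies the induced map (up to $h$-equivalence) with $\mathrm{fib}(Y)\to\mathrm{fib}(Y')$, which is a $q$-equivalence because $BG$ is path-connected (as $G$ is grouplike) and path-lifting for $h$-fibrations propagates the $q$-equivalence from one fiber to every fiber.

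Reflection of weak equivalences between fibrant objects is the same observation run backwards: if $Y\to Y'$ is a map of $h$-fibrations over $BG$ such that $\Map_{BG}(EG,Y)\to\Map_{BG}(EG,Y')$ is an $e$-equivalence, then by \autoref{eg-htpy-fiber} the fiber map $\mathrm{fib}(Y)\to\mathrm{fib}(Y')$ is a $q$-equivalence, so $Y\to Y'$ is fiberwise a $q$-equivalence, hence a $q$-equivalence over $BG$ by the standard long-exact-sequence / five-lemma argument applied once per component (using path-connectedness of $BG$ to reduce to a single fiber).

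For the derived counit at an $h$-fibration $Y\to BG$, I would factor
\[B(*,G,Q\Map_{BG}(EG,Y)) \too B(*,G,\Map_{BG}(EG,Y)) \too Y,\]
with the second map the point-set counit (evaluation $(e,f)\mapsto f(e)$) and $Q$ denoting $qe$-cofibrant replacement. Restricted to the fiber over the basepoint of $BG$ — using that for any $G$-space $X$ one has $\mathrm{fib}(B(*,G,X))=X$ — this composite becomes
\[Q\Map_{BG}(EG,Y) \too \Map_{BG}(EG,Y) \too \mathrm{fib}(Y),\]
a $q$-equivalence by definition of $Q$ followed by the $h$-equivalence of \autoref{eg-htpy-fiber}. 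To upgrade this fiberwise $q$-equivalence to a $q$-equivalence over $BG$, I would argue that both source and target of the displayed counit composite are quasifibrations over the path-connected base $BG$: the target $Y$ is even an $h$-fibration, and the source is a quasifibration by the classical theorem that $B(*,G,X)\to BG$ is a quasifibration whenever $G$ is a well-pointed grouplike monoid and $X$ is a suitably well-pointed $G$-space. The main obstacle lies exactly here: one must ensure that $Q\Map_{BG}(EG,Y)$ inherits enough well-pointedness (non-degeneracy of the basepoint, or sufficient cofibrancy in $G\sK$) for May's quasifibration criterion to apply, which is precisely the role of the $m$-cofibrancy and nondegenerate-basepoint hypotheses in \autoref{assume:g}.
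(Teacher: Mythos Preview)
Your verification that the adjunction is Quillen has a real gap. To show that the right adjoint preserves \emph{trivial} $m$-fibrations you invoke \autoref{eg-htpy-fiber}, but that lemma applies only when the structure maps $Y\to BG$ and $Y'\to BG$ are themselves $h$-fibrations. A general $m$-trivial $m$-fibration $Y\to Y'$ in $\sK/BG$ is an $h$-fibration that is a $q$-equivalence, but nothing forces $Y'\to BG$ to be $h$-fibrant, so you cannot identify $\Map_{BG}(EG,Y')$ with $\mathrm{fib}(Y')$. The paper avoids this entirely by checking the \emph{left} adjoint instead: the generating $qe$-cofibrations $G\times S^{n-1}\to G\times D^n$ are sent to $EG\times S^{n-1}\to EG\times D^n$, which are $m$-cofibrations because $EG$ is $m$-cofibrant and the $m$-structure is monoidal; similarly for the trivial cofibrations. (Equivalently, one can rescue your right-adjoint argument by noting via the adjunction that lifting $\Map_{BG}(EG,p)$ against $G\times S^{n-1}\to G\times D^n$ is the same as lifting $p$ against $EG\times S^{n-1}\to EG\times D^n$, an $m$-cofibration---but this is just the left-adjoint check in disguise.)

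For the Quillen equivalence, your reflection-plus-counit strategy is valid but more laborious than necessary, and your stated ``main obstacle'' is not actually an obstacle. The paper verifies the direct criterion---$f\maps B(*,G,X)\to Y$ is a $q$-equivalence iff $\fhat\maps X\to\Map_{BG}(EG,Y)$ is---and does so for \emph{every} $G$-space $X$, not just cofibrant ones, so no replacement $Q$ ever enters. The key input is exactly May's result \cite[7.6]{may:csf} that $B(*,G,X)\to BG$ is a quasifibration whenever $G$ is grouplike; this needs the hypotheses on $G$ in \autoref{assume:g} but imposes no well-pointedness condition on $X$. Once that is in hand, $f$ is a $q$-equivalence iff it is one on fibers, i.e.\ iff $X\to\mathrm{fib}(Y)$ is a $q$-equivalence, and the commuting triangle with \autoref{eg-htpy-fiber} finishes by 2-out-of-3. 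Your counit argument is essentially this same computation wrapped in an extra layer of cofibrant replacement.
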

\begin{proof}
  The $qe$-model structure is cofibrantly generated, so to show
  that~\eqref{eq:g-bg-adjn} is Quillen, it suffices to show that the
  left adjoint takes the generating $qe$-cofibrations and trivial
  cofibrations to $m$-cofibrations and trivial cofibrations.  The
  generating $qe$-cofibrations are the maps
  \[G\times S^{n-1}\to G\times D^n,\]
  which are taken by the Borel construction to
  \[EG\times S^{n-1} \to EG\times D^n.
  \]
  By~\cite[A.6]{may:ei-gp-perm}, $EG$ is $m$-cofibrant because $G$ is.
  Thus, since the $m$-structure is monoidal, these maps are
  $m$-cofibrations.  The case of the generating trivial
  $qe$-cofibrations is analogous.

  We now show that the adjunction is a Quillen equivalence.  Let $X$
  be a $qe$-cofibrant $G$-space and let $Y$ be an $h$-fibrant space
  over $BG$; we must show that a map $f\maps B(*,G,X)\to Y$ is a
  $q$-equivalence if and only if its adjunct $\fhat\maps X\to
  \Map_{BG}(EG,Y)$ is a $q$-equivalence.  Actually, we will show that
  this is true for \emph{any} $G$-space $X$ and any $h$-fibrant $Y$
  over $BG$.

  By~\cite[7.6]{may:csf}, since $G$ is grouplike, $B(*,G,X)\to BG$ is
  a quasfibration.  Therefore, a map $f\maps B(*,G,X)\to Y$ is a
  $q$-equivalence if and only if it induces a $q$-equivalence on
  fibers.  But the fiber of $B(*,G,X)$ (over the base point) is just
  $X$, so this is true if and only if $X\to \mathrm{fib}(Y)$ is a
  $q$-equivalence.  We now have a commutative triangle
  \[\xymatrix{X \ar[rr] \ar[dr]_<>(.5)\fhat &&
    \mathrm{fib}(Y) \\
    &  \Map_{BG}(EG,Y).\ar[ur]}\]
  We have just argued that the horizontal map is a $q$-equivalence
  precisely when $f$ is.  Since the right-hand diagonal map is an
  $h$-equivalence by \autoref{eg-htpy-fiber}, the desired result follows
  from the 2-out-of-3 property.
\end{proof}

It follows that for any connected nondegenerately based $m$-cofibrant
space $A$, we have a chain of equivalences of homotopy categories
\[\Ho_q(\sK/A) \eqv \Ho_q(\sK/B\Omega A) \eqv \Ho_e((\Omega A)\sK).\]
If $A$ is not $m$-cofibrant, we can first replace it by a CW complex
$\Atil$ and obtain a longer chain of equivalences.

\begin{rmk}
  There is also an \textbf{$h$-model structure} on $G\sK$ in which the
  weak equivalences, fibrations, and cofibrations are the equivariant
  homotopy equivalences (where the homotopy inverse and homotopies
  must also be equivariant), equivariant Hurewicz fibrations, and
  equivariant Hurewicz cofibrations.  Any $h$-equivalence is an
  $e$-equivalence and any $h$-fibration is a $qe$-fibration, so there
  is a mixed \textbf{$me$-model structure}, and the
  adjunction~\eqref{eq:g-bg-adjn} can be shown to also be a Quillen
  equivalence between the $me$-model structure and the $m$-model
  structure.
\end{rmk}

Finally, by \autoref{qeqv->pted}, we have a corresponding result in
the sectioned and pointed cases.

\begin{cor}
  Under \autoref{assume:g}, the pointed/sectioned version
  of~\eqref{eq:g-bg-adjn},
  \begin{equation}\label{eq:ptd-g-bg-adjn}
    B(*,G,-)\maps G\sK_* \toot \sK_{BG} \spam \Map_{*,BG}(EG,-),
  \end{equation}
  is also a Quillen equivalence.
\end{cor}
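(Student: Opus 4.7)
The plan is to apply \autoref{qeqv->pted} to the unpointed Quillen equivalence of \autoref{g-bg-equiv}. The Quillen-adjunction half of \autoref{qeqv->pted} applies immediately and yields~\eqref{eq:ptd-g-bg-adjn} as a Quillen adjunction between the pointed/sectioned categories; the remaining work is to verify the two additional hypotheses needed for the Quillen-equivalence conclusion, namely that the terminal object of $G\sK$ is $qe$-cofibrant and is preserved by $B(*,G,-)$.

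Preservation is immediate: the terminal $G$-space is the one-point space $*$ with trivial action, and $B(*,G,*) = BG$, which is precisely the terminal object of $\sK/BG$.

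The main obstacle is the cofibrancy condition. Since the generating $qe$-cofibrations have the form $G\times S^{n-1}\to G\times D^n$, the $qe$-cofibrant objects are built from free $G$-cells, so the one-point $G$-space with trivial action is not manifestly $qe$-cofibrant. However, the role of this cofibrancy hypothesis in \autoref{qeqv->pted} is merely to ensure that a cofibrant pointed object has cofibrant underlying unpointed object, so that the unpointed Quillen-equivalence criterion transfers. We can sidestep the difficulty by recalling that the proof of \autoref{g-bg-equiv} in fact verified the criterion for an \emph{arbitrary} $G$-space $X$ paired with any $h$-fibrant $Y$ over $BG$, not merely cofibrant ones. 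Since the forgetful functors $G\sK_*\to G\sK$ and $\sK_{BG}\to\sK/BG$ reflect weak equivalences, and the latter also preserves $h$-fibrancy, the pointed/sectioned Quillen-equivalence criterion follows directly from the already-established unpointed one: for cofibrant $X\in G\sK_*$ and $h$-fibrant $Y\in \sK_{BG}$, a map $B(*,G,X)\to Y$ is a $q$-equivalence iff its adjunct $X\to \Map_{*,BG}(EG,Y)$ is.
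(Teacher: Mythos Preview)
Your approach is the same as the paper's---simply invoke \autoref{qeqv->pted}---but you are more careful than the paper about one point. The paper's one-line justification cites \autoref{qeqv->pted} without comment, yet the terminal object of $G\sK$ (the point with trivial $G$-action) is \emph{not} $qe$-cofibrant in general: the $qe$-cofibrant objects are retracts of free $G$-cell complexes, and for nontrivial $G$ these have no $G$-fixed points, so $*$ cannot be such a retract. You correctly spotted this and supplied the right fix: the proof of \autoref{g-bg-equiv} explicitly establishes the Quillen-equivalence criterion for \emph{every} $G$-space $X$, not only cofibrant ones, and since the forgetful functors $G\sK_*\to G\sK$ and $\sK_{BG}\to\sK/BG$ create weak equivalences and commute with the adjoints (the latter because $B(*,G,*)=BG$), the pointed criterion follows immediately from the unpointed one. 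This is exactly the argument one would expect the paper to have in mind; you have simply made it explicit.
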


\section{Base change and $G$-spaces}
\label{sec:base-change-g}

We now compare the base change functors for parametrized spaces with
those for $G$-spaces.  If $f\maps G\to H$ is a map of topological
monoids, it induces a restriction functor $f^*\maps H\sK\to G\sK$,
which has both adjoints $f_!$ and $f_*$ given by left and right Kan
extension.  It is easy to see that the adjunction $f_!\adj f^*$ is
Quillen for the $qe$-model structures, since $f^*$ preserves
fibrations and weak equivalences.  We denote the resulting derived
adjunction by $\bfL_e f_! \adj \bfR_e f^*$.

The map $f$ also induces a map $Bf\maps BG\to BH$ and thus the usual
string of adjunctions $(Bf)_! \adj (Bf)^* \adj (Bf)_*$ between
$\sK/BG$ and $\sK/BH$.  As always, the adjunction $(Bf)_!\adj (Bf)^*$
is Quillen for the $q$- and $m$-model structures.  We write
\[\bfL B(*,G,-) \maps \Ho_e(G\sK) \toot \Ho_q(\sK/BG) \spam \bfR \Map_{BG}(EG,-)\]
for the derived equivalence of the Quillen equivalence from
\autoref{g-bg-equiv}.

\begin{thm}
  If $f\maps G\to H$ is a map between topological monoids satisfying
  \autoref{assume:g}, then we have a natural isomorphism
  \begin{equation}\label{eq:g-bg-basechange}
    \bfR \Map_{BG}(EG,-) \circ \bfR_q (Bf)^* \iso
    \bfR_e f^* \circ \bfR \Map_{BH}(EH,-).
  \end{equation}
\end{thm}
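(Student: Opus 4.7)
The strategy is to check the statement on the point-set level at an $h$-fibrant $Y\to BH$. Since $(Bf)^*$ preserves $h$-fibrations and $f^*$ preserves $qe$-fibrations and weak equivalences between them, while the $q$- and $m$-model structures on $\sK/BH$ share their weak equivalences, the derived functors on both sides of~(\ref{eq:g-bg-basechange}) are realized point-set at such a $Y$. Thus it suffices to produce a natural $e$-equivalence
\[\theta_Y \maps f^*\Map_{BH}(EH,Y) \too \Map_{BG}(EG,(Bf)^*Y).\]

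For the construction, the universal property of pullback yields a natural $G$-equivariant identification $\Map_{BG}(EG,(Bf)^*Y) \iso \Map_{BH}(EG,Y)$, where on the right $EG$ is viewed over $BH$ via the composite $EG\to BG\too[Bf] BH$. The monoid map $f$ induces a $G$-equivariant map $Ef\maps EG\to EH$ over $Bf$ (with $G$ acting on $EH$ through $f$), and precomposition with $Ef$ then defines $\theta_Y$. This is canonical in the sense that it is the mate of the natural map $(Bf)_!\,B(*,G,-)\to B(*,H,-)\,f_!$ likewise induced by $Ef$.

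To see that $\theta_Y$ is an $e$-equivalence when $Y$ is $h$-fibrant, examine the commutative triangle of $G$-spaces
\[\xymatrix@C=3em{f^*\Map_{BH}(EH,Y)\ar[rr]^{\theta_Y}\ar[dr] && \Map_{BH}(EG,Y)\ar[dl]\\
& \mathrm{fib}(Y) &}\]
whose diagonal maps are induced by the inclusion of a chosen basepoint $\ast\into EG$ (and, on the left, the resulting composite $\ast\to EG\too[Ef] EH$). The left diagonal is an $h$-trivial $h$-fibration by \autoref{eg-htpy-fiber}. The argument of that lemma applies verbatim with $G$ in place of $H$---using that $\ast\to EG$ is an $h$-cofibration by \autoref{assume:g} applied to $G$, and an $h$-equivalence since $EG$ is contractible---so the right diagonal is also an $h$-trivial $h$-fibration. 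By 2-out-of-3, $\theta_Y$ is then an $h$-equivalence of underlying spaces, hence an $e$-equivalence in $G\sK$.

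The main technical care required is in tracking the $G$-actions through the pullback identification and the definition of $\theta_Y$; once this is in place, the homotopical core of the argument is contained entirely in \autoref{eg-htpy-fiber} and the contractibility of $EG$ and $EH$. The natural isomorphism~(\ref{eq:g-bg-basechange}) then arises as the derived natural transformation associated to $\theta$.
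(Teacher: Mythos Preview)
Your proposal is correct and follows essentially the same route as the paper: construct the comparison map via $Ef\maps EG\to EH$ over $Bf$, form the commutative triangle over $\mathrm{fib}(Y)$, apply \autoref{eg-htpy-fiber} to both diagonals (using that $(Bf)^*Y$ is $h$-fibrant over $BG$), and conclude by 2-out-of-3. Your explicit identification $\Map_{BG}(EG,(Bf)^*Y)\iso\Map_{BH}(EG,Y)$ and remarks on the $G$-actions add a bit of detail the paper leaves implicit, but the argument is the same.
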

\begin{proof}
  By the composability of Quillen adjunctions, we have isomorphisms
  \begin{align*}
    \bfR \Map_{BG}(EG,-) \circ \bfR_q (Bf)^* &\iso
    \bfR \Map_{BG}(EG,(Bf)^*-)\\
    \bfR_e f^* \circ \bfR \Map_{BH}(EH,-) &\iso
    \bfR (f^*\Map_{BH}(EH,-)).
  \end{align*}
  Now, for any space $Y$ over $BH$, there is a canonical morphism
  \begin{equation}
    f^*\Map_{BH}(EH,Y) \too \Map_{BG}(EG,(Bf)^*Y)\label{eq:g-bg-basechange-map}
  \end{equation}
  induced by the map $Ef\maps EG\to EH$ over $Bf$.  Moreover, the
  following triangle commutes.
  \[\xymatrix{f^*\Map_{BH}(EH,Y) \ar[rr] \ar[dr] &&
    \Map_{BG}(EG,(Bf)^*Y) \ar[dl]\\
    & \save[] *{\Map_{BH}(*,Y) = 
      \Map_{BG}(*,(Bf)^*Y) = 
      \mathrm{fib}(Y)} \phantom{\Map_{BG}(*,(Bf)^*Y)} \restore}\]
  By \autoref{eg-htpy-fiber}, the diagonal maps are $q$-equivalences
  when $Y$ is $h$-fibrant, hence in this
  case~\eqref{eq:g-bg-basechange-map} is also a $q$-equivalence.  Thus
  it represents an isomorphism~\eqref{eq:g-bg-basechange} of derived
  functors, as desired.
\end{proof}

\begin{cor}
  We also have a natural isomorphism
  \begin{equation}\label{eq:g-bg-basechange-mate}
    \bfL_q (Bf)_! \circ \bfL B(*,G,-)  \iso
    \bfL B(*,H,-) \circ \bfL_e f_!.
  \end{equation}
\end{cor}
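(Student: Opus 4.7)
The plan is to derive this corollary from the preceding theorem by a standard mate argument, using that every adjunction in sight arises by deriving a Quillen adjunction. Since Theorem~\ref{g-bg-equiv} gives a Quillen equivalence, the derived adjunctions $\bfL B(*,G,-) \adj \bfR\Map_{BG}(EG,-)$ and $\bfL B(*,H,-) \adj \bfR\Map_{BH}(EH,-)$ exist and are equivalences of homotopy categories; similarly the Quillen adjunctions $(Bf)_! \adj (Bf)^*$ and $f_! \adj f^*$ derive to adjunctions $\bfL_q (Bf)_! \adj \bfR_q (Bf)^*$ and $\bfL_e f_! \adj \bfR_e f^*$. Hence both sides of~\eqref{eq:g-bg-basechange} are composites of right adjoints, and dually both sides of~\eqref{eq:g-bg-basechange-mate} are composites of left adjoints.

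Concretely, $\bfL_q (Bf)_! \circ \bfL B(*,G,-)$ is left adjoint to $\bfR\Map_{BG}(EG,-) \circ \bfR_q (Bf)^*$, while $\bfL B(*,H,-) \circ \bfL_e f_!$ is left adjoint to $\bfR_e f^* \circ \bfR\Map_{BH}(EH,-)$. The previous theorem exhibits a natural isomorphism between the two right adjoints, so by the standard fact that the mate correspondence sends natural isomorphisms between right adjoints to natural isomorphisms between their left adjoints, we obtain a canonical natural isomorphism between the left adjoints, which is precisely~\eqref{eq:g-bg-basechange-mate}.

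The argument is formal, so no real obstacle arises. The only thing to be careful about is that the mate correspondence is being applied at the derived level; this is justified by the general results of~\cite{shulman:dblderived} concerning mates of derived natural transformations, which have already been used in the proof of \autoref{basechange-mate-ident}. One can, if desired, also write out~\eqref{eq:g-bg-basechange-mate} as the composite of the unit of one equivalence, the explicit map~\eqref{eq:g-bg-basechange} and the counit of the other equivalence, verifying directly that it is an isomorphism using the Quillen equivalence property, but the mate formalism makes this unnecessary.
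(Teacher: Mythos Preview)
Your proposal is correct and matches the paper's implicit argument: the corollary is stated without proof precisely because it follows by taking mates of the isomorphism~\eqref{eq:g-bg-basechange} between right adjoints. One minor remark: invoking~\cite{shulman:dblderived} is unnecessary here, since~\eqref{eq:g-bg-basechange} is already an isomorphism at the homotopy-category level, and the mate of an isomorphism between right adjoints is an isomorphism between left adjoints by pure category theory---no derived-functor subtleties arise.
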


This means that under the identification of $\Ho_e(G\sK)$ with
$\Ho_q(\sK/BG)$, the derived adjunctions of $f_!\adj f^*$ and
$(Bf)_!\adj (Bf)^*$ agree.  It is easy to check that these results
remain true in the pointed/sectioned case.

It follows from general results about diagram categories
in~\cite[\S22]{shulman:htpylim} that the adjunction
\[f^*\maps H\sK\toot G\sK\spam f_*,
\]
while not in general a Quillen adjunction, does have a derived
adjunction.  The right derived functor $\bfR_e f_*$ can be computed
explicitly as a cobar construction:
\[\bfR_e f_* (X) = C(H,G,X).\]
Moreover, since $f^*$ preserves all $e$-equivalences, its left and
right derived functors agree, so we obtain a chain of derived
adjunctions
\[\bfL_e f_! \adj \bfR_e f^* \iso \bfL_e f^* \adj \bfR_e f_*.\]
In particular, $\bfR_e f^*$ has a right adjoint $\bfR_e f_*$.
Since $\bfR_e f^*$ is isomorphic to $\bfR_q (Bf)^*$, it follows
that the latter also has a totally defined right adjoint, without the
need to appeal to Brown representability.  The same is true in the
pointed/sectioned case.

We can use this, in theory, to compute $\bfR_q g_*$ for an arbitrary
map $g\maps A\to D$ between connected base spaces, by passing along
the chain of Quillen equivalences and computing $\bfR_e (\Omega g)_*$.
This procedure may be too complicated to be useful in practice,
however.

\begin{rmk}
  Of course, the restriction to connected base spaces is innocuous in
  the case considered here: since $\sK/(A\sqcup B) \eqv \sK/A \times
  \sK/B$, we can deal with non-connected base spaces by splitting them
  up into their path components.  However, in an equivariant context,
  this restriction becomes more problematic because `connectedness' is
  a subtler notion.  This does not necessarily mean that our intuition
  that spaces over $B$ are equivalent to $\Pi_\infty(B)$-spaces is
  wrong equivariantly, just that our naive approach using loop spaces
  fails.

  The correct equivariant notion of `homotopy sheaf' is likewise
  somewhat unclear.  If $G$ is a topological group and $B$ is a
  $G$-space, an equivariant $ij$-model structure on $G\sK/B$ is
  constructed in~\cite{ij:ex-spaces}.  The weak equivalences (resp.\
  fibrations) are the maps inducing weak equivalences (resp.\
  fibrations) on spaces of $H$-equivariant sections over $U$, whenever
  $H\le G$ is a closed subgroup and $U\subset B$ is an $H$-invariant
  open set.  If we let $\sO_G(B)$ denote the full topological
  subcategory of $G\sK/B$ spanned by the objects $G\times_H U$ for
  such pairs $(H,U)$, then these weak equivalences and fibrations are
  created by the functor
  \begin{equation}\label{eq:equivar-sections}
    \begin{array}{rcl}
      G\sK/B &\too& \sK^{\sO_G(B)\op}\\
      X &\mapsto& \Map_B(-,X),
    \end{array}
  \end{equation}
  where $\sK^{\sO_G(B)\op}$ is the category of topological presheaves
  on the topologically enriched category $\sO_G(B)$.
  Thus,~\eqref{eq:equivar-sections} is right Quillen from the
  $ij$-structure on $G\sK/B$ to the projective model structure on
  $\sK^{\sO_G(B)\op}$.  We may hope to localize the projective model
  structure to make this adjunction into a Quillen equivalence, but
  the correct covers to use are not obvious.

  The theory of parametrized spaces works just as well equivariantly,
  as is evident in~\cite{maysig:pht}, but it is also unclear whether
  it embeds in the theory of equivariant homotopy sheaves sketched
  above.
\end{rmk}

\bibliography{all,shulman}
\bibliographystyle{halpha}

\end{document}